\newtheorem{theorem}{Theorem}
\newtheorem{lemma}[theorem]{Lemma}
\begin{document}
\title[Convolution Operators with Singular Measures of Fractional Type]{Convolution Operators with Singular Measures of Fractional Type on the Heisenberg Group}
\author{Tom\'as Godoy and Pablo Rocha}
\address{Universidad Nacional de C\'ordoba, FaMAF-UNC, C\'ordoba, 5000 C\'ordoba, Argentina}
\email{godoy@famaf.unc.edu.ar, \ rp@famaf.unc.edu.ar}
\thanks{\textbf{2.010
Math. Subject Classification}: 423A80, 42A38}
\thanks{\textbf{Key
words and phrases}: Singular measures, group Fourier transform, Heisenberg group, convolution operators}
\thanks{Partially supported by Conicet and Secyt-UNC}
\maketitle
\begin{abstract}
We consider the Heisenberg group $\mathbb{H}^{n}=\mathbb{C}^{n} \times \mathbb{R}$.
Let $\mu_{\gamma}$ be the fractional Borel measure on $\mathbb{H}^{n}$ defined by
$$ \mu_{\gamma}(E) = \int_{\mathbb{C}^{n}}\chi_{E}\left(w,\varphi(w)\right) \prod_{j=1}^{n} \eta_j \left( |w_j|^{2} \right) | w_j |^{-\frac{\gamma}{n}}dw,$$
where $0 < \gamma < 2n$, $\varphi(w) = \sum\limits_{j=1}^{n} a_{j} \left\vert w_{j}\right\vert^{2}$, $w=(w_{1},...,w_{n}) \in \mathbb{C}^{n}$,
$a_{j} \in \mathbb{R}$, and $\eta_j \in C_{c}^{\infty}(\mathbb{R})$. In this paper we study the set of pairs $(p,q)$ such that the right convolution operator with $\mu_{\gamma}$ is bounded from $L^{p}(\mathbb{H}^{n})$ into $L^{q}(\mathbb{H}^{n})$.

\end{abstract}

\section{Introduction}

Let $\mathbb{H}^{n} = \mathbb{C}^{n} \times \mathbb{R}$ be the Heisenberg group with group law $\left( z,t\right) \cdot \left(w,s\right)
=\left( z+w,t+s+\left\langle z,w\right\rangle \right) $ where $\langle z,w \rangle = \frac{1}{2} Im(\sum \limits_{j=1}^{n}z_{j} \cdot \overline{w_{j}})$. For $x=(x_{1},...,x_{2n})\in \mathbb{R}^{2n}$, we write $x=(x^{\prime },x^{\prime \prime })$ with
$x^{\prime }\in \mathbb{R}^{n}$, $x^{\prime \prime }\in \mathbb{R}^{n}$. So, $\mathbb{R}^{2n}$ can be identified with $\mathbb{C}^{n}$ via the map
$\Psi (x^{\prime },x^{\prime \prime })=x^{\prime }+ix^{\prime \prime }$. In this setting the form $\langle z,w \rangle $ agrees with the standard symplectic form on $\mathbb{R}^{2n}$. Thus $\mathbb{H}^{n}$ can be viewed as $\mathbb{R}^{2n} \times \mathbb{R}$
endowed with the group law
$$
\left( x,t\right) \cdot \left( y,s\right) =\left( x+y,t+s+\frac{1}{2} B(x,y) \right)
$$
where the symplectic form  $B$ is given by $B(x,y)=\sum \limits_{j=1}^{n}\left( y_{n+j}x_{j}-y_{j}x_{n+j}\right) $, with $x=(x_{1},...,x_{2n})$ and $y=(y_{1},...,y_{2n})$, with neutral element $(0,0)$, and with inverse $\left( x,t\right) ^{-1}=\left(-x,-t\right) $.

Let $\varphi :\mathbb{R}^{2n}\rightarrow \mathbb{R}$ be a measurable function, and let $\mu_{\gamma} $ be the fractional Borel measure on $\mathbb{H}^{n}=\mathbb{R}^{2n}\times \mathbb{R}$ supported on the graph of $\varphi$, given by
\begin{equation}
\left\langle \mu_{\gamma} ,f\right\rangle
=\int\limits_{\mathbb{R}^{2n}}f\left( w,\varphi \left( w\right)
\right)  \prod_{j=1}^{n} \eta_j \left( |w_j|^{2} \right) \left\vert w_j \right\vert ^{-\frac{\gamma}{n} }dw
  \label{mu2}
\end{equation}
with $0< \gamma < 2n$, and where the $\eta _{j}$'s are functions in $C_{c}^{\infty }(\mathbb{R})$ such that $0\leq \eta _{j}\leq 1$, $\eta _{j}(t)\equiv 1$ if $t\in \lbrack -1,1]$ and $supp(\eta _{j})\subset (-2,2)$.

Let $T_{\mu_{\gamma} }$ be the right convolution operator by $\mu_{\gamma} $, defined by
\begin{equation}
T_{\mu_{\gamma} }f\left( x,t\right) =\left( f\ast \mu_{\gamma} \right) \left(x,t\right) =\int_{\mathbb{R}^{2n}}f\left( \left( x,t\right) \cdot
\left( w,\varphi \left( w\right) \right) ^{-1}\right) \prod_{j=1}^{n} \eta_j \left( |w_j|^{2} \right) \left\vert w_j \right\vert ^{-\frac{\gamma}{n}} dw.   \label{tmu}
\end{equation}
We are interested in studying the type set
$$
E_{\mu_{\gamma} }=\left\{ \left( \frac{1}{p},\frac{1}{q}\right) \in \left[0,1\right] \times \left[ 0,1\right] :\left\Vert T_{\mu_{\gamma}}\right\Vert _{L^{p}-L^{q}}<\infty \right\}
$$
where the $L^{p}$ - spaces are taken with respect to the Lebesgue measure on $\mathbb{R}^{2n+1}$. We say that that the measure $\mu_{\gamma}$ defined in (\ref{mu2}) is $L^{p}$-\textit{improving} if $E_{\mu_{\gamma}}$ does not reduce to the diagonal $1/p=1/q$.

This problem is well known if in (\ref{tmu}) we consider $\gamma =0$ and replace the Heisenberg group
convolution with the ordinary convolution in $\mathbb{R}^{2n+1}$. If the
graph of $\varphi$ has non-zero Gaussian curvature at each point, a theorem
of Littman (see \cite{littman}) implies that $E_{\nu }$ is the closed
triangle with vertices $(0,0)$, $(1,1)$, and $\left( \frac{2n+1}{2n+2},\frac{%
1}{2n+2}\right) $ (see \cite{oberlin}). A very interesting survey of results
concerning the type set for convolution operators with singular measures can
be found in \cite{ricci}.\\
Returning to our setting $\mathbb{H}^{n}$, in \cite{secco} and \cite{secco2} S. Secco obtains $L^{p}$-\textit{improving} properties of measures
supported on curves in $\mathbb{H}^{1}$, under certain assumptions. In \cite{ricci2} F. Ricci and E. Stein showed that the type set of the
measure given by (\ref{mu2}), for the case $\varphi(w)=0$, $\gamma =0$ and $n=1$, is the triangle with vertices
$(0,0),$ $(1,1),$ and $\left( \frac{3}{4},\frac{1}{4}\right)$. In \cite{G-R}, the authors adapt the work of Ricci and Stein for the case of manifolds quadratic hypersurfaces in $\mathbb{R}^{2n+1}$, there we also give some examples of surfaces with degenerate curvature at the origin.

We observe that if $\left( \frac{1}{p},\frac{1}{q}\right) \in E_{\mu_{\gamma}}$ then
\begin{equation}
p\leq q, \,\,\,\,\,\,\,\,\,\,\,\, \frac{1}{q}\geq \frac{2n+1}{p}-2n, \,\,\,\,\,\,\,\,\,\,\,\,  \frac{1}{q}\geq\frac{1}{(2n+1)p}. \label{restricciones}
\end{equation}

Indeed, the first inequality follows from Lemma 3 in \cite{G-R}, replacing the sets $A_{\delta}$ and $F_{\delta, x}$ in the proof of Lemma 4 in \cite{G-R} by the sets
$$
A'_{\delta }=\left\{ (x,t)\in \mathbb{R}^{2n}\times
\mathbb{R}:x\in \widetilde{D}\wedge \left\vert t-\varphi (x)\right\vert \leq
\frac{\delta }{4}\right\}
$$
and
$$
F'_{\delta ,x}=\left\{ y\in \widetilde{D}:\left\Vert x-y\right\Vert _{\mathbb{R}%
^{2n}}\leq \frac{\delta }{4n(1+\left\Vert \nabla \varphi \mid
_{supp(\eta )}\right\Vert _{\infty })}\right\}
$$ where $\widetilde{D}$ is a closed disk in $\mathbb{R}^{2n}$ contained in the unit disk centered in the origin such that the origin not belongs to $\widetilde{D}$, we observe that the argument utilized in the proof of Lemma 4 in \cite{G-R} works in this setting so we get the others two inequalities.

Since $0 < \gamma < 2n$ it is clear that $\| T_{\mu_{\gamma}} f \|_{p} \leq c \|f\|_{p}$ for all Borel function $f \in L^{p}(\mathbb{H}^{n})$ and all $1 \leq p \leq \infty$, so $(\frac{1}{p}, \frac{1}{p}) \in E_{\mu_{\gamma}}$.

In Lemma 4, section 2 below, we obtain the following necessary condition for the pair $(\frac{1}{p}, \frac{1}{q})$ to be in $E_{\mu_{\gamma}}$:
$$\frac{1}{q}\geq \frac{1}{p}-\frac{2n-\gamma }{2n+2}.$$

Let $D$ be the point of intersection, in the $(\frac{1}{p}, \frac{1}{q})$ plane, between the lines $\frac{1}{q}=\frac{2n+1}{p}
-2n$, $\frac{1}{q}=\frac{1}{p}-\frac{2n-\gamma }{2n+2}$ and let $D^{\prime }$ be its symmetric with respect to the non principal diagonal. So
\[
D=\left( \frac{4n^{2}+2n+\gamma }{2n(2n+2)},\frac{2n+(2n+1)\gamma }{2n(2n+2)}%
\right) =\left( \frac{1}{p_{D}},\frac{1}{q_{D}}\right) \text{ y\
}D^{\prime }=\left( 1-\frac{1}{q_{D}},1-\frac{1}{p_{D}}\right).
\]
Thus $E_{\mu_{\gamma}}$ is contained in the closed trapezoid with vertices $(0,0)$, $(1,1)$, $D$ and $D^{\prime }$.

Finally, let $C_{\gamma}$ be the point of intersection of the lines
$\frac{1}{q}=1- \frac{1}{p}$ and $\frac{1}{q}= \frac{1}{p}- \frac{2n-\gamma}{2n+2}$, thus $C_{\gamma}= \left( \frac{4n+2-\gamma}{2(2n+2)},
\frac{2+\gamma}{2(2n+2)} \right)$.

In section 3 we prove the following results:

\begin{theorem} If $\mu_{\gamma}$ is the fractional Borel measure defined in (\ref{mu2}), supported on the graph of the function $\varphi
(w)= \sum_{j=1}^{n} a_j \left\vert w_j \right\vert ^{2}$, with $n \in \mathbb{N}$, $a_j \in \mathbb{R}$ and $w_j \in \mathbb{R}^{2}$, then the interior of the type set $E_{\mu_{\gamma}}$ coincide with the interior of the trapezoid
with vertices $(0,0)$, $(1,1)$, $D$ and $D^{\prime }$. Moreover the semi-open segments $\left[(1,1);(p_{D}^{-1},q_{D}^{-1}) \right)$ and
$\left[(0,0);(1-q_{D}^{-1},1-p_{D}^{-1}) \right)$ are contained in $E_{\mu_{\gamma}}$.
\end{theorem}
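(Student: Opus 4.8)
The plan is to establish the positive direction: all pairs in the interior of the trapezoid with vertices $(0,0)$, $(1,1)$, $D$, $D'$ lie in $E_{\mu_\gamma}$, together with the two semi-open segments from the corners $(1,1)$ and $(0,0)$ toward $D$ and $D'$. Combined with the restrictions already recorded in the excerpt (the line $\frac1q=\frac{2n+1}{p}-2n$, its dual, and the line $\frac1q=\frac1p-\frac{2n-\gamma}{2n+2}$ from Lemma 4), this pins down the interior of $E_{\mu_\gamma}$. By interpolation (Riesz--Thorik) together with the trivial $L^p\to L^p$ bound, it suffices to prove a single off-diagonal estimate at (or arbitrarily near) the vertex $D'$, and then a single restricted or actual estimate governing the segment $[(1,1);D)$; duality $T_{\mu_\gamma}^{*}$ corresponds to convolution with $\widetilde{\mu_\gamma}$ (pushforward under inversion), which is of the same type, so the segment $[(0,0);D')$ and the behaviour near $D$ follow by symmetry about the non-principal diagonal. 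Thus the crux is one estimate: $\|T_{\mu_\gamma}f\|_{q_D}\lesssim\|f\|_{p_D}$, or rather its version for $(\frac1p,\frac1q)$ in a neighbourhood so that the open trapezoid is filled and the endpoint on the segment near $(1,1)$ is captured.

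The main tool should be the group Fourier transform on $\mathbb{H}^n$. Writing $\widehat{f}(\lambda)$ for the operator-valued Fourier transform (acting on Fock space, indexed by $\lambda\in\mathbb{R}\setminus\{0\}$), convolution becomes composition: $\widehat{T_{\mu_\gamma}f}(\lambda)=\widehat f(\lambda)\,\widehat{\mu_\gamma}(\lambda)$. Because $\varphi(w)=\sum_j a_j|w_j|^2$ is a sum of one-dimensional pieces and the density $\prod_j\eta_j(|w_j|^2)|w_j|^{-\gamma/n}$ also factors, $\widehat{\mu_\gamma}(\lambda)$ will diagonalize (or nearly so) in the Hermite basis: on the $k$-th Hermite eigenspace one gets a scalar that is essentially a Laguerre-type integral
\begin{equation}
m_k(\lambda)=\int_{\mathbb{R}^{2n}}e^{i\lambda\varphi(w)}\,L_{k}^{\mathbb{H}}(|\lambda|,w)\prod_{j=1}^{n}\eta_j(|w_j|^2)|w_j|^{-\gamma/n}\,dw,
\end{equation}
and the $L^2\to L^2$ norm of $\widehat{\mu_\gamma}(\lambda)$ is $\sup_k|m_k(\lambda)|$. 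The first key step is to obtain the uniform decay $\sup_k|m_k(\lambda)|\lesssim|\lambda|^{-(n-\gamma/2)}$ (equivalently, the Laguerre/Hermite sum decays at the rate dictated by the $2n-\gamma$ homogeneity of the density together with the $n$-dimensional oscillation of the quadratic phase). This is the analogue, in the group setting, of the Fourier-decay estimate for the Euclidean measure on a quadric of nondegenerate curvature, and it is exactly the source of the exponent $\frac{2n-\gamma}{2n+2}$ in the Lemma 4 line. I would prove it by stationary phase in each $w_j$ block when $a_j\neq0$, handling the $|w_j|^{-\gamma/n}$ singularity via an analytic-continuation/Erdélyi-type argument (the integral $\int_0^\infty e^{i\lambda a_j r}r^{-\gamma/(2n)}\eta_j(r)\,dr$ decays like $|\lambda|^{-1+\gamma/(2n)}$), and when $a_j=0$ noting that the $j$-th factor then contributes no oscillation but the operator norm bound still holds because the Laguerre functions are uniformly bounded after the right normalization.

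With the $L^2\to L^2$ decay in hand, the second step is to convert it into an $L^1\to L^\infty$-type or $L^p\to L^q$ bound via the Plancherel theorem for $\mathbb{H}^n$ (integrating $\|\widehat{f}(\lambda)\,\widehat{\mu_\gamma}(\lambda)\|_{HS}^2\,|\lambda|^n\,d\lambda$) combined with a dyadic decomposition in $\lambda$: on $|\lambda|\sim 2^\ell$ the piece $\mu_\gamma^\ell$ of the measure obeys simultaneously an $L^1\to L^\infty$ bound of size $\sim 2^{\ell(n - (n-\gamma/2))}=2^{\ell\gamma/2}$-type growth (from the kernel) and an $L^2\to L^2$ bound of size $\sim 2^{-\ell(n-\gamma/2)}$ decay, and interpolating the two and summing the geometric series in $\ell$ yields boundedness precisely on the closed segment/trapezoid edge through $D'$ and into the interior; the endpoint on $[(1,1);D)$ comes out because at that vertex the two competing powers of $2^\ell$ balance with a summable (not just bounded) ratio. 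I expect the \textbf{main obstacle} to be the uniform-in-$k$ Laguerre estimate of the first step: one must control the transition regimes where the Hermite index $k$ and the frequency $|\lambda|$ are comparable (so the classical large-$\lambda$ stationary-phase picture degenerates), and one must do so uniformly over all $2^n$ sign patterns of the $a_j$'s, including the degenerate blocks $a_j=0$; this is where the detailed asymptotics of Laguerre functions (Askey--Wainger type bounds) and a careful splitting of the $w$-integral near and away from the critical points of $\varphi$ will be needed, and it is the technical heart of the paper.
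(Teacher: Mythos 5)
Your overall skeleton (prove one good off-diagonal estimate, then interpolate with the trivial diagonal bounds and dualize, then invoke the necessary conditions from (\ref{restricciones}) and Lemma 4) is the right shape, but the engine you propose for the key estimate cannot reach the points the theorem requires. Your scheme localizes dyadically in the central frequency $\lambda$ and, on each piece, pairs an $L^1\to L^\infty$ bound with an $L^2\to L^2$ bound coming from the uniform Laguerre-coefficient decay. Interpolating those two always lands on the anti-diagonal $\frac1p+\frac1q=1$, so after summing the dyadic series the best single off-diagonal point you can produce is (essentially) $C_\gamma$, and further interpolation with $(0,0)$ and $(1,1)$ yields only the closed triangle with apex $C_\gamma$. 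But the vertex $D$ satisfies $\frac{1}{p_D}+\frac{1}{q_D}=1+\frac{\gamma}{2n}>1$: it lies strictly off the anti-diagonal, and the trapezoid with vertices $(0,0),(1,1),D,D'$ is strictly larger than that triangle. So your argument, even if every technical step (uniform-in-$k$ Laguerre asymptotics, degenerate blocks $a_j=0$, etc.) were carried out, would not give the interior of the trapezoid near $D$ and $D'$, nor the semi-open segments $\left[(1,1);(p_D^{-1},q_D^{-1})\right)$ and $\left[(0,0);(1-q_D^{-1},1-p_D^{-1})\right)$, which are precisely the content of the theorem. (What you describe is in substance the proof of Theorem 2 of the paper, which establishes $C_\gamma\in E_{\mu_\gamma}$ by an analytic-family/group-Fourier-transform argument.)

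The paper's proof of Theorem 1 uses a different, and much more elementary, decomposition: dyadic in the \emph{space} variable around the singularity of the density, not in frequency. Writing $\mu_\gamma=\sum_k\mu_k$ with $\mu_k$ supported on the annuli $A_k=\{\,2^{-k}<|y_j|\le 2^{-k+1}\,\}$, one has on each piece the two bounds $\|T_{\mu_k}\|_{\frac{2n+2}{2n+1},\,2n+2}\le c\,2^{k\gamma}$ (by comparing the density with the nonsingular case $\gamma=0$ and quoting the endpoint result of Theorem 1 in \cite{G-R}, which already encapsulates all the curvature/oscillation information you were trying to re-derive) and $\|T_{\mu_k}\|_{1,1}\le c\,2^{-k(2n-\gamma)}$ (the mass of the annulus). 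Riesz--Thorin between these balances the two exponents exactly at $D$, giving uniform boundedness there, and one further interpolation toward $(1,1)$ makes the series in $k$ geometrically summable, which is what yields the semi-open segment up to (but not including) $D$; duality and convexity then finish. If you want to salvage your approach, you must inject a second, non-anti-diagonal piece of information per dyadic block --- e.g.\ an $L^1\to L^1$ bound that decays in the block parameter --- and a frequency block of $\mu_\gamma$ does not provide that, whereas a spatial block does.
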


\begin{theorem} Let $\mu_{\gamma}$ be a fractional Borel measure as in Theorem 1. Then $C_{\gamma} \in E_{\mu_{\gamma}}$.
\end{theorem}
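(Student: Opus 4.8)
The plan is to prove that $C_\gamma \in E_{\mu_\gamma}$, i.e.\ that $T_{\mu_\gamma}$ is bounded from $L^{p_{C_\gamma}}$ to $L^{q_{C_\gamma}}$ at the single point where the restriction line $\frac1q = \frac1p - \frac{2n-\gamma}{2n+2}$ meets the anti-diagonal $\frac1q = 1 - \frac1p$. Since by Theorem 1 the interior of the trapezoid already lies in $E_{\mu_\gamma}$, and $C_\gamma$ sits on the boundary of that trapezoid (on the segment $[D,D']$), the natural strategy is analytic interpolation with change of measure (Stein's interpolation theorem) applied to an analytic family of operators that degenerates to $T_{\mu_\gamma}$. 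Concretely, I would embed $\mu_\gamma$ into a family $\mu_z$ obtained by replacing each factor $|w_j|^{-\gamma/n}$ by $|w_j|^{-z/n}$ (times the same cutoffs $\eta_j$), so that $\mu_z$ is the analytic continuation in $z$ of the fractional measure, meromorphic in $z$ with the relevant strip of holomorphy governed by $0 < \operatorname{Re} z < 2n$.

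The key steps, in order: First, establish the $L^2 \to L^2$ endpoint. Using the group Fourier transform (Plancherel for $\mathbb{H}^n$, decomposition over the Schrödinger representations $\pi_\lambda$), one shows that $\widehat{\mu_z}(\lambda)$ is a bounded operator on Fock/$L^2(\mathbb{R}^n)$ space uniformly in $\lambda$ precisely when $\operatorname{Re} z = 2n$ — this is where the oscillation from $\varphi(w) = \sum a_j|w_j|^2$ and the stationary-phase / Bessel-type decay in the radial variables $|w_j|^2$ combine to give the gain; one expects $\|\widehat{\mu_{2n+i\beta}}(\lambda)\|_{op} \le C(1+|\beta|)^N$. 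Second, on the other end of the strip, at $\operatorname{Re} z = 0$ (or rather as $\operatorname{Re} z \to 0^+$), the measure $\mu_{i\beta}$ is essentially an $L^1$ density on the graph, so $T_{\mu_{i\beta}}$ is bounded $L^1 \to L^1$ and $L^\infty \to L^\infty$ with norm growing polynomially in $\beta$. Third, feed these two facts into Stein interpolation: at $z$ with the right real part, $T_{\mu_z}$ maps $L^{p}\to L^{q}$ on the line through the corresponding interpolated exponents, and one checks that $z = \gamma$ lands exactly on the point $C_\gamma = \left(\frac{4n+2-\gamma}{2(2n+2)}, \frac{2+\gamma}{2(2n+2)}\right)$; the anti-diagonal constraint $\frac1q = 1-\frac1p$ is automatic because the two endpoint estimates are both "symmetric" ($L^2\to L^2$ and $L^1\to L^\infty$-type, or $L^1\to L^1$ with $L^\infty \to L^\infty$), so every point produced on the interpolation segment satisfies $\frac1p + \frac1q = 1$.

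I would carry out the $L^2$ computation via the representation-theoretic formula: for $(w,\varphi(w))$ on the graph, $\pi_\lambda(w,\varphi(w))$ acts by a twisted translation composed with multiplication by $e^{i\lambda\varphi(w)}$, so $\widehat{\mu_z}(\lambda) = \int_{\mathbb{C}^n} e^{i\lambda\varphi(w)} \pi_\lambda(w,0)\, \prod_j \eta_j(|w_j|^2)|w_j|^{-z/n}\,dw$, which factors (by the product structure of $\mathbb{H}^n$ over the $w_j$-coordinates and the diagonal form of $\varphi$) into a tensor product of one-dimensional operators $A_j(\lambda)$; each $A_j(\lambda)$ is a Weyl-type transform of a radial function with a fractional singularity, estimable by passing to polar coordinates and invoking known asymptotics of the Laguerre/Hermite matrix coefficients together with the oscillatory factor $e^{i\lambda a_j r^2}$. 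The main obstacle I anticipate is precisely this uniform-in-$\lambda$ operator-norm bound on $\widehat{\mu_{2n+i\beta}}(\lambda)$ with only polynomial growth in $\beta$: one must track the singular weight $|w_j|^{-(2n+i\beta)/n}$ carefully so that the $w_j$-integral converges (it is borderline), exploit the cutoffs $\eta_j$ to localize, and control how the estimate depends on $\operatorname{Im} z$ — since Stein interpolation requires at most exponential-of-sublinear, in fact polynomial, growth in the imaginary direction on each edge of the strip. The rest (the $\operatorname{Re} z \to 0$ edge and the bookkeeping that $z=\gamma$ hits $C_\gamma$) should be routine given the formulas already recorded for $D$, $D'$ and $C_\gamma$ in the excerpt.
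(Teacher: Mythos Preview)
Your overall architecture (analytic family + Stein interpolation, group Fourier transform for the $L^2$ endpoint, factorization over the $w_j$'s) matches the paper, but the specific family you set up has a genuine gap that prevents the argument from closing.

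If you only deform the weight, replacing $|w_j|^{-\gamma/n}$ by $|w_j|^{-z/n}$, then for every $z$ with $0\le\operatorname{Re}z<2n$ the object $\mu_z$ is still a finite measure supported on the graph of $\varphi$. Convolution with such a measure is bounded $L^p\to L^p$ for all $p$, but it is \emph{never} bounded $L^1\to L^\infty$; there is no point on your strip where the kernel becomes a bounded function. Consequently your two endpoint estimates are $(1/2,1/2)$ and points on the diagonal, and Stein interpolation between them produces only diagonal points $1/p=1/q$; you never reach $C_\gamma$, which lies on the anti-diagonal $1/p+1/q=1$. (The parenthetical ``$L^1\to L^\infty$-type'' in your write-up is exactly the step that cannot be justified for this family.) In addition, at your intended $L^2$ edge $\operatorname{Re}z=2n$ the weight $|w_j|^{-2}$ is not locally integrable on $\mathbb{R}^2$, so $\mu_{2n+i\beta}$ is not even a well-defined measure; the ``borderline'' you flag is on the wrong side.

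The paper repairs both issues simultaneously by enlarging the family: one convolves not only with a deformed surface measure $\mu_{\gamma,z}$ (weight $|w_j|^{(z-1)\gamma/n}$) but also, in the central variable $t$, with the Riesz kernel $I_z(s)=\frac{2^{-z/2}}{\Gamma(z/2)}|s|^{z-1}$ (regularized by a smooth cutoff $\widehat{\phi_N}$). This extra convolution does two things. First, at $\operatorname{Re}z=1$ it smears the singular measure into an $L^\infty$ function on $\mathbb{H}^n$, yielding the genuine $L^1\to L^\infty$ endpoint needed to land on the anti-diagonal. Second, on the Fourier side it contributes a factor $I_{1-z}(\lambda)\sim|\lambda|^{-z}$ to the diagonal entries $\nu_{N,z}(\alpha,\lambda)$; this factor is exactly what balances the powers of $|\lambda|$ coming from the radial integrals, so that the $L^2\to L^2$ bound holds at $\operatorname{Re}z=-\frac{2n-\gamma}{2+\gamma}$, a value at which the $w$-weight is still integrable. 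The uniform (in $\alpha$ and $\lambda$) control of those entries is then obtained not from stationary phase in $e^{i\lambda a_j r^2}$ but from explicit Laguerre identities (Lemmas~5--7), which also give the required admissible growth in $\operatorname{Im}z$. Interpolating between $(1,0)$ at $\operatorname{Re}z=1$ and $(1/2,1/2)$ at $\operatorname{Re}z=-\frac{2n-\gamma}{2+\gamma}$ then places $z=0$ precisely at $C_\gamma$. The missing ingredient in your plan is this fractional integration in $t$; once you add it, the rest of your outline is essentially what the paper does.
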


 Let $\widetilde{\mu}_{\gamma}$ be the Borel measure given by
\begin{equation}
\left\langle \widetilde{\mu}_{\gamma} ,f\right\rangle
=\int\limits_{\mathbb{R}^{2n}}f\left( w, |w|^{2m}
\right)  \eta \left( |w|^{2} \right) | w |^{\gamma }dw,
  \label{mu3}
\end{equation}
where $m \in \mathbb{N}_{\geq 2}$, $\gamma = \frac{2(m-1)}{(n+1)m}$, and $\eta$ is a function in $C_{c}^{\infty }(\mathbb{R})$ such that $0\leq \eta \leq 1$, $\eta(t)\equiv 1$ if $t\in \lbrack -1,1]$ and $supp(\eta)\subset (-2,2)$. \\
In a similar way we characterize the type set of the Borel measure $\widetilde{\mu}_{\gamma}$ supported on the graph of the function $\varphi(w) = |w|^{2m}$. In fact we prove

\begin{theorem} Let $\widetilde{\mu}_{\gamma}$ be the Borel measure defined in (\ref{mu3}) with $\gamma = \frac{2(m-1)}{(n+1)m}$, where $n \in \mathbb{N}$
and $m \in \mathbb{N}_{\geq 2}$. Then the type set $E_{\widetilde{\mu}_{\gamma}}$ is the closed triangle with vertices
\[
A=\left( 0,0\right) ,\qquad B=\left( 1,1\right) ,\qquad C=\left( \frac{2n+1}{
2n+2},\frac{1}{2n+2}\right).
\]
\end{theorem}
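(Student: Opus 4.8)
The plan is to prove the two inclusions $E_{\widetilde{\mu}_{\gamma}}\subseteq\mathcal{T}$ and $\mathcal{T}\subseteq E_{\widetilde{\mu}_{\gamma}}$ separately, where $\mathcal{T}$ denotes the closed triangle with vertices $A,B,C$.

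\textbf{Necessity.} The key observation is that the three sides of $\mathcal{T}$ are exactly the three lines $p=q$, $\frac1q=\frac{2n+1}{p}-2n$ (joining $B$ and $C$) and $\frac1q=\frac1{(2n+1)p}$ (joining $A$ and $C$) occurring in \eqref{restricciones}; so it is enough to check that \eqref{restricciones} remains valid when $\mu_{\gamma}$ is replaced by $\widetilde{\mu}_{\gamma}$. This is done exactly as in the discussion following \eqref{restricciones}: on a closed disk $\widetilde{D}\subseteq\{|w|<1\}$ with $0\notin\widetilde{D}$ the weight $|w|^{\gamma}$ is bounded above and below by positive constants, so Lemma~3 of \cite{G-R} (for $p\le q$) and the argument of Lemma~4 of \cite{G-R} applied to the sets $A'_{\delta},F'_{\delta,x}$, together with the Heisenberg dilation automorphism (for the remaining two inequalities), carry over verbatim. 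One should also check that testing $T_{\widetilde{\mu}_{\gamma}}$ against a small Heisenberg ball centred at the spatial origin — where the graph degenerates — produces no stronger restriction; this leads to a condition of the form $\frac1q\ge\frac1p-\frac{2n+\gamma}{2n+2}$, whose defining line lies strictly below $C$ since $\gamma>0$, hence is vacuous on $\mathcal{T}$. Therefore $E_{\widetilde{\mu}_{\gamma}}\subseteq\mathcal{T}$.

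\textbf{Sufficiency: reduction and the part away from the origin.} Since $\gamma>0$ the measure $\widetilde{\mu}_{\gamma}$ is finite, so $T_{\widetilde{\mu}_{\gamma}}$ is bounded on $L^{1}$ and on $L^{\infty}$, i.e. $A,B\in E_{\widetilde{\mu}_{\gamma}}$; as type sets are convex under interpolation, it suffices to prove the single strong-type bound $T_{\widetilde{\mu}_{\gamma}}\colon L^{p_{C}}(\mathbb{H}^{n})\to L^{q_{C}}(\mathbb{H}^{n})$ with $(\tfrac1{p_{C}},\tfrac1{q_{C}})=C=(\tfrac{2n+1}{2n+2},\tfrac1{2n+2})$, after which interpolating $C$ with $A$ and with $B$ fills $\mathcal{T}$; note that $C$ lies on the anti-diagonal $\frac1p+\frac1q=1$. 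Split $\widetilde{\mu}_{\gamma}=\mu^{\mathrm{far}}+\mu^{\mathrm{near}}$ according to $|w|\ge c$ or $|w|<c$ for a small fixed $c>0$. On $\operatorname{supp}\mu^{\mathrm{far}}$ the graph $t=|w|^{2m}$ is a smooth hypersurface of non-vanishing Gaussian curvature carrying a smooth density bounded below, so $T_{\mu^{\mathrm{far}}}$ is bounded at $C$ (indeed on all of $\mathcal{T}$) by the Heisenberg counterpart of Littman's theorem for such hypersurfaces, available from \cite{G-R} (cf. \cite{ricci2}); the group twisting only helps here.

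\textbf{The part near the origin.} I would decompose $\mu^{\mathrm{near}}=\sum_{j\ge j_{0}}\nu_{j}$ with $\nu_{j}$ supported where $|w|\sim2^{-j}$ ($j_{0}$ large fixed), and conjugate each $\nu_{j}$ by the Heisenberg dilation automorphism $\delta_{2^{j}}(x,t)=(2^{j}x,2^{2j}t)$. Using that $\mathbb{H}^{n}$-convolution transforms covariantly under $\delta_{r}$ and that the homogeneous dimension is $Q=2n+2$, one obtains
\[
\|T_{\nu_{j}}\|_{L^{p_{C}}\to L^{q_{C}}}=2^{-j\gamma}\,\|T_{\sigma_{j}}\|_{L^{p_{C}}\to L^{q_{C}}},
\]
where $\sigma_{j}$ is a measure of total mass $\asymp 1$ supported on the gently bent graph $t=2^{-2(m-1)j}|u|^{2m}$, $|u|\sim 1$, with density $|u|^{\gamma}$ (the dilation Jacobian $2^{jQ(1/p_{C}-1/q_{C})}=2^{2nj}$ and the mass $2^{-j(2n+\gamma)}$ of $\nu_{j}$ combine to $2^{-j\gamma}$). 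Via the group Fourier transform and the Ricci--Stein scheme of \cite{ricci2}, \cite{G-R} — decompose $\sigma_{j}$ in the dual parameter $\lambda$ by convolving with a Heisenberg Littlewood--Paley kernel, estimate the operator norm of $\widehat{\sigma_{j}}(\lambda)$ on each $\lambda$-shell, combine with the trivial $L^{1}\to L^{1}$, $L^{\infty}\to L^{\infty}$ bounds and the kernel-size bound on $L^{1}\to L^{\infty}$, interpolate along the anti-diagonal and sum — one shows $\|T_{\sigma_{j}}\|_{L^{p_{C}}\to L^{q_{C}}}\le C_{0}$ uniformly in $j$: the bending $2^{-2(m-1)j}\to 0$ does not enter unfavourably because the decay of $\widehat{\sigma_{j}}(\lambda)$ is driven by the representation-theoretic twisting (which survives the flat limit), not by the curvature. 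Summing the geometric series,
\[
\|T_{\mu^{\mathrm{near}}}\|_{L^{p_{C}}\to L^{q_{C}}}\le C_{0}\sum_{j\ge j_{0}}2^{-j\gamma}<\infty ,
\]
which uses $\gamma>0$; together with $\mu^{\mathrm{far}}$ this gives $C\in E_{\widetilde{\mu}_{\gamma}}$, and hence $\mathcal{T}\subseteq E_{\widetilde{\mu}_{\gamma}}$. The hypothesis $\gamma=\frac{2(m-1)}{(n+1)m}$ is the value that makes the degeneracy order $2m$ of the phase and the various scaling exponents balance so that the two inclusions meet exactly along the sides of $\mathcal{T}$.

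\textbf{Main obstacle.} The hard part is the uniform-in-$j$ endpoint bound $\|T_{\sigma_{j}}\|_{L^{p_{C}}\to L^{q_{C}}}\le C_{0}$: besides being an endpoint of a type set — which typically forces a Lorentz-space refinement or an analytic-interpolation device in the last step — it requires all constants in the Ricci--Stein argument to be controlled uniformly as the graph flattens, hence a careful analysis of the interaction between the twisting oscillation carried by $\pi_{\lambda}(u,0)$ and the auxiliary phase $e^{i2^{-2(m-1)j}\lambda|u|^{2m}}$ in the operator-norm estimates for $\widehat{\sigma_{j}}(\lambda)$. The remaining ingredients — the necessary conditions, the reduction to $C$, the treatment of $\mu^{\mathrm{far}}$, and the trivial endpoints — are routine or can be quoted from \cite{ricci2} and \cite{G-R}.
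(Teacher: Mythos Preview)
Your strategy is genuinely different from the paper's, and it has a real gap at the decisive step.

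\textbf{What the paper does.} There is no dyadic decomposition and no splitting into ``far'' and ``near'' pieces. Instead the paper embeds $T_{\widetilde{\mu}_{\gamma}}$ in an analytic family $U_{N,z}f=f\ast\widetilde{\mu}_{(1-z)\gamma}\ast J_{N,z}$ on the strip $-n\le\mathrm{Re}(z)\le 1$. At $\mathrm{Re}(z)=1$ one gets the $L^{1}\to L^{\infty}$ bound trivially. At $\mathrm{Re}(z)=-n$ the kernel is radial and integrable, so by Geller's lemma its group Fourier transform is diagonal on the Hermite basis; the diagonal entries are computed via the Laguerre identities of Lemmas~5--7 together with a van~der~Corput estimate for the oscillatory factor $\int_{0}^{|\lambda|}e^{i(2^{m}\mathrm{sgn}(\lambda)|\lambda|^{1-m}\sigma^{m}-\xi\sigma)}\,d\sigma$, which contributes a decay $|\lambda|^{(m-1)/m}$. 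The hypothesis $\gamma=\tfrac{2(m-1)}{(n+1)m}$ is \emph{exactly} what makes the three powers of $|\lambda|$ --- from $I_{1-z}$, from the change of variables $\sigma=|\lambda|s^{2}/2$, and from the van~der~Corput bound --- cancel, giving an $L^{2}\to L^{2}$ bound uniform in $\lambda$, $k$, and $N$. Stein interpolation then lands on $C$ directly, and the necessary conditions~\eqref{restricciones} close the triangle.

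\textbf{Where your argument breaks.} First, the ``uniform in $j$'' endpoint bound $\|T_{\sigma_{j}}\|_{L^{p_{C}}\to L^{q_{C}}}\le C_{0}$ is the whole content of the theorem, and you only assert it; carrying it out would in effect require the same Laguerre/van~der~Corput analysis the paper performs, so nothing has been gained by rescaling. Second, your treatment of $\mu^{\mathrm{far}}$ appeals to a ``Heisenberg Littman theorem from \cite{G-R}'', but \cite{G-R} and \cite{ricci2} cover only flat or quadratic $\varphi$, not $\varphi(w)=|w|^{2m}$; there is no such black box to quote. Third, your sufficiency argument, if it worked, would give $C\in E_{\widetilde{\mu}_{\gamma}}$ for \emph{every} $\gamma>0$, and your necessity argument uses neither $\gamma$ nor $m$; so your closing sentence about the specific value of $\gamma$ ``balancing the scaling exponents'' does not match what your proof actually does. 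In the paper that value is forced by an exact cancellation of $|\lambda|$--powers in the multiplier estimate, and this is where the degeneracy order $2m$ genuinely enters.
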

This result improves to the one obtained in Theorem 2 in \cite{G-R}.

\qquad

Throughout this work, $c$ will denote a positive constant not necessarily the same at each occurrence.

\section{Auxiliary results}

\begin{lemma} Let $\mu_{\gamma}$ be the fractional Borel measure defined by (\ref{mu2}), where $\varphi(w) = \sum_{j=1}^{n} a_j |w_j|^{2}$ and $0 < \gamma < 2n$. If $\left(\frac{1}{p},\frac{1}{q}\right) \in E_{\mu_{\gamma}}$, then $\frac{1}{q}\geq \frac{1}{p}-\frac{2n-\gamma}{2n+2}$.
\end{lemma}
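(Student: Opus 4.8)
The plan is to establish the necessary condition $\frac{1}{q}\geq\frac{1}{p}-\frac{2n-\gamma}{2n+2}$ by testing the operator $T_{\mu_\gamma}$ against a suitably chosen family of functions concentrated near the origin, exploiting the fact that the density $\prod_j \eta_j(|w_j|^2)|w_j|^{-\gamma/n}$ is essentially $\prod_j |w_j|^{-\gamma/n}$ for $w$ in a small neighborhood of $0$, which blows up there. First I would fix a small parameter $\delta>0$ and consider the "box-type" set in $\mathbb{C}^n\times\mathbb{R}$ adapted to the anisotropic geometry of the problem: take $f=f_\delta=\chi_{E_\delta}$ where $E_\delta=\{(w,t): |w_j|\le\delta \text{ for all }j,\ |t|\le\delta^2\}$ (the exponent $2$ on $t$ matching the quadratic form $\varphi$ and the symplectic term in the group law). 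One computes $\|f_\delta\|_p \sim \delta^{(2n+2)/p}$, since $|E_\delta|\sim\delta^{2n}\cdot\delta^2=\delta^{2n+2}$.

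Next I would estimate $T_{\mu_\gamma}f_\delta(x,t)=(f_\delta\ast\mu_\gamma)(x,t)$ from below on a comparable set. For $(x,t)$ in a set $E_\delta'$ of the form $\{|x_j|\le c\delta,\ |t-\varphi(x)|\le c\delta^2\}$, and for $w$ ranging over $\{|w_j|\le c\delta\}$, the point $(x,t)\cdot(w,\varphi(w))^{-1}$ lies in $E_\delta$: the $\mathbb{C}^n$-component is $x-w$, which has each coordinate $\lesssim\delta$, and the $\mathbb{R}$-component is $t-\varphi(w)-\frac12 B(x,w)$, which (using $|t-\varphi(x)|\le c\delta^2$, $|\varphi(w)|\lesssim\delta^2$, $|B(x,w)|\lesssim\delta^2$, and the identity $\varphi(x)-\varphi(w)-\frac12 B(x,w)=\varphi(x-w)$ coming from the parallelogram-type structure of $\varphi$) is $\lesssim\delta^2$. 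Hence on $E_\delta'$,
$$
T_{\mu_\gamma}f_\delta(x,t)\ \geq\ \int_{|w_j|\le c\delta}\prod_{j=1}^n|w_j|^{-\gamma/n}\,dw\ \sim\ \prod_{j=1}^n\Big(\int_{|w_j|\le c\delta}|w_j|^{-\gamma/n}\,dw_j\Big)\ \sim\ \delta^{2n-\gamma},
$$
each factor contributing $\delta^{2-\gamma/n}$ since $w_j\in\mathbb{R}^2$ and $\gamma/n<2$ makes the integral convergent. Since $|E_\delta'|\sim\delta^{2n+2}$ as well, we get $\|T_{\mu_\gamma}f_\delta\|_q\gtrsim\delta^{2n-\gamma}\delta^{(2n+2)/q}$.

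Finally, boundedness $\|T_{\mu_\gamma}f_\delta\|_q\le C\|f_\delta\|_p$ forces $\delta^{2n-\gamma+(2n+2)/q}\lesssim\delta^{(2n+2)/p}$ for all small $\delta>0$, hence the exponent inequality $2n-\gamma+\frac{2n+2}{q}\ge\frac{2n+2}{p}$, which rearranges to $\frac{1}{q}\ge\frac{1}{p}-\frac{2n-\gamma}{2n+2}$, as claimed. The main obstacle, and the step requiring care, is verifying precisely that $(x,t)\cdot(w,\varphi(w))^{-1}\in E_\delta$ for the stated ranges of $(x,t)$ and $w$ — i.e.\ tracking the cross term $\frac12 B(x,w)$ in the group law together with $\varphi(w)$ and confirming the cancellation $\varphi(x)-\varphi(w)-\frac12 B(x,w)=\varphi(x-w)$ holds up to an $O(\delta^2)$ error (here it is in fact exact when each $a_j$ is handled via $|w_j|^2$, since $B$ is the relevant bilinear form); everything else is a routine scaling bookkeeping.
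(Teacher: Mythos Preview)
Your approach is essentially the paper's: test $T_{\mu_\gamma}$ against characteristic functions of shrinking neighborhoods of the origin, show the convolution is $\gtrsim\delta^{2n-\gamma}$ on a set of measure $\sim\delta^{2n+2}$, and compare exponents. The paper uses Euclidean balls $\|w\|\le\delta$ rather than your product boxes $|w_j|\le\delta$, and bounds the density via $\prod_j|y_j|^{-\gamma/n}\ge|y|^{-\gamma}$ instead of your direct Fubini computation, but these are cosmetic differences.

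One correction: the identity $\varphi(x)-\varphi(w)-\tfrac12 B(x,w)=\varphi(x-w)$ that you claim is ``in fact exact'' is \emph{false}: $B$ is the antisymmetric symplectic form, whereas $\varphi(x)-\varphi(w)-\varphi(x-w)=2\sum_j a_j\,\mathrm{Re}(x_j\overline{w_j})$ is symmetric in $x,w$, so they cannot coincide. This does not damage your argument, since all you actually need is $|t-\varphi(w)-\tfrac12 B(x,w)|=O(\delta^2)$, which follows immediately from the triangle inequality because each of $|t|$, $|\varphi(w)|$, $|B(x,w)|$ is individually $O(\delta^2)$; this is precisely how the paper handles the $t$-coordinate.
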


\begin{proof} For $0<\delta \leq 1$, we define $Q_{\delta}=D_{\delta}\times \left[-(4M+n)\delta^{2},(4M+n)\delta^{2}\right]$, where
$D_{\delta}=\left\{x \in \mathbb{R}^{2n}: \left\Vert x \right\Vert \leq \delta \right\}$ and $M=\max \left\{\left\vert \varphi(y)\right\vert:y \in D_{1} \right\}.$ We put
$$A_{\delta} = \left\{(x,t) \in D_{\frac{\delta}{2}}\times \mathbb{R}: \left\vert t- \varphi(x) \right\vert \leq 2M\delta^{2}\right\}.$$
Let $f_{\delta} = \chi_{Q_{\delta}}$. We will prove first that $\left\vert (f_{\delta} \ast \mu_{\gamma})(x,t) \right\vert\geq c \delta^{2n-\gamma}$ for all $(x,t) \in A_{\delta}$, where $c$ is a constant independent of $\delta$. \\
If $(x,t) \in A_{\delta}$, we have that
\begin{equation}
(x,t)\cdot (y,\varphi(y))^{-1} \in Q_{\delta} \text{\textit{ for all }} y \in D_{\frac{\delta}{2}}; \label{qd}
\end{equation}
indeed, $(x,t)\cdot (y,\varphi(y))^{-1}=\left(x-y,t-\varphi(y)-\frac{1}{2} B(x,y) \right)$,
from the homogeneity of $\varphi$ and since
$\frac{1}{2}\left\vert B(x,y)\right\vert \leq n\left\Vert x\right\Vert _{ \mathbb{R}^{2n}}\left\Vert x-y\right\Vert _{\mathbb{R}^{2n}}$, (\ref{qd}) follows . So
\[
\left\vert(f_{\delta} \ast \mu)(x,t) \right\vert = \int_{\mathbb{R}^{2n}} f_{\delta}\left((x,t)\cdot(y,\varphi(y))^{-1} \right) \prod_{j=1}^{n} \eta_j(|y_j|^{2}) \left\vert y_j \right\vert^{-\frac{\gamma}{n}} dy
\]
\[
\geq \int_{D_{\frac{\delta}{2}}} \left\vert y \right\vert^{-\gamma} \prod_{j=1}^{n} \eta_j(|y_j|^{2}) dy
=\int_{D_{\frac{\delta}{2}}}\left\vert y \right\vert^{-\gamma} dy=c \delta^{2n-\gamma}
\]
for all $(x,t) \in A_{\delta}$ and all $0 < \delta < 1/2$. Thus
\[
\left\Vert f_{\delta} \ast \mu_{\gamma} \right\Vert _{q}\geq \left(\int_{A_{\delta}} \left\vert f \ast \mu_{\gamma} \right\vert^{q} \right)^{\frac{1}{q}}\geq c\delta ^{2n-\gamma}\left\vert A_{\delta} \right\vert ^{\frac{1}{q}} = c \delta^{2n-\gamma+\frac{1}{q}(2n+2)}.
\]
On the other hand
$\left(\frac{1}{p},\frac{1}{q}\right) \in E_{\mu}$ implies
\[
\left\Vert f_{\delta} \ast \mu_{\gamma} \right\Vert _{q}\leq c \left\Vert f_{\delta} \right\Vert_{p}= c \delta^{\frac{1}{p}(2n+2)},
\]
therefore
$\delta^{2n-\gamma+\frac{1}{q}(2n+2)}\leq c
\delta^{\frac{1}{p}(2n+2)}$ for all $0 < \delta < 1$ small enough, then
$$\frac{1}{q}\geq \frac{1}{p} - \frac{2n-\gamma}{2n+2}.$$
\end{proof}

The following two lemmas deal on certain identities that involve to the Laguerre polynomials.
We recall the definition of these polynomials: the Laguerre polynomials $L^{\alpha}_{n}(x)$ are defined by the formula
\[
L^{\alpha}_{n}(x)= e^{x} \frac{x^{-\alpha}}{n!} \frac{d^{n}}{dx^{n}}(e^{-x} x^{n + \alpha}), \,\,\,\,\,\,\,\, n=0, 1, 2, ...
\]
for arbitrary real number $\alpha > -1$.

\begin{lemma} If $Re(\beta)>-1$, then
$$\int\limits_{0}^{\infty }\sigma ^{\beta }L_{k}^{n-1}\left( \sigma
\right) e^{-\sigma \left( \frac{1}{2}+i\xi \right) }d\sigma
=\frac{1}{k!} \left[\frac{d^{k}}{dr^{k}} \left(
\frac{\Gamma (\beta +1)}{(1-r)^{n} \left(
\frac{1}{2}+\frac{r}{1-r}+i\xi \right)^{\beta +1}}\right) \right]_{r=0},$$
for $n \in \mathbb{N}$ and $k \in \mathbb{N} \cup \{ 0 \}$.
\end{lemma}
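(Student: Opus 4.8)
The plan is to obtain the identity from the generating function of the generalized Laguerre polynomials,
\[
\sum_{k=0}^{\infty} L_{k}^{n-1}(\sigma)\, r^{k} = \frac{1}{(1-r)^{n}}\exp\!\left(-\frac{\sigma r}{1-r}\right), \qquad |r|<1,
\]
which is classical and follows from the Rodrigues formula recalled above. Multiplying by $\sigma^{\beta}e^{-\sigma(\frac{1}{2}+i\xi)}$, integrating over $(0,\infty)$, and (granting for the moment the interchange of sum and integral) one gets
\[
\sum_{k=0}^{\infty} r^{k}\int_{0}^{\infty}\sigma^{\beta}L_{k}^{n-1}(\sigma)e^{-\sigma(\frac{1}{2}+i\xi)}\,d\sigma = \frac{1}{(1-r)^{n}}\int_{0}^{\infty}\sigma^{\beta}e^{-\sigma\left(\frac{1}{2}+\frac{r}{1-r}+i\xi\right)}\,d\sigma = \frac{\Gamma(\beta+1)}{(1-r)^{n}\left(\frac{1}{2}+\frac{r}{1-r}+i\xi\right)^{\beta+1}},
\]
the last step being the standard formula $\int_{0}^{\infty}\sigma^{\beta}e^{-\sigma z}\,d\sigma=\Gamma(\beta+1)z^{-\beta-1}$, valid here since $Re(\beta)>-1$ and $Re\!\left(\frac{1}{2}+\frac{r}{1-r}\right)>0$ for all $|r|<1$ (the M\"obius map $r\mapsto\frac{1}{2}+\frac{r}{1-r}$ carries the unit disc into the open right half-plane). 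Denoting the last expression by $G(r)$, which is holomorphic in a neighbourhood of $r=0$ because the base of the power stays in the right half-plane, the identity will follow by comparing the Taylor coefficients of the two power series in $r$.

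The delicate point — and the main obstacle — is the interchange of $\sum_{k}$ and $\int_{0}^{\infty}$: the crude bound $|L_{k}^{n-1}(\sigma)|\le\binom{k+n-1}{k}e^{\sigma/2}$ is useless here, since it yields the non-integrable majorant $\sigma^{Re(\beta)}$. Instead I would extract a sharper estimate directly from the generating function via Cauchy's inequality: for $0<\rho'<1$,
\[
|L_{k}^{n-1}(\sigma)|\le \rho'^{-k}\max_{|r|=\rho'}\left|(1-r)^{-n}e^{-\sigma r/(1-r)}\right|\le \frac{\rho'^{-k}}{(1-\rho')^{n}}\,e^{\frac{\rho'}{1+\rho'}\sigma},
\]
where the last inequality uses $Re\!\left(\frac{r}{1-r}\right)\ge-\frac{\rho'}{1+\rho'}$ on $|r|=\rho'$. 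Since $\frac{\rho'}{1+\rho'}<\frac{1}{2}$ for every $\rho'\in(0,1)$, fixing $\rho<\rho'<1$ gives, for $|r|\le\rho$, the majorant
\[
\sum_{k=0}^{\infty}\left|\sigma^{\beta}L_{k}^{n-1}(\sigma)r^{k}e^{-\sigma(\frac{1}{2}+i\xi)}\right|\le c\,\sigma^{Re(\beta)}\,e^{-\left(\frac{1}{2}-\frac{\rho'}{1+\rho'}\right)\sigma},
\]
whose exponent is strictly negative; since $Re(\beta)>-1$ this is integrable on $(0,\infty)$, so dominated convergence justifies the interchange on $|r|<\rho$, hence — $\rho<1$ being arbitrary — on the whole unit disc.

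With this in hand the proof closes: $\sum_{k\ge 0}a_{k}r^{k}=G(r)$ in a neighbourhood of $0$, where $a_{k}=\int_{0}^{\infty}\sigma^{\beta}L_{k}^{n-1}(\sigma)e^{-\sigma(\frac{1}{2}+i\xi)}\,d\sigma$, so uniqueness of the power-series expansion forces
\[
a_{k}=\frac{1}{k!}\,G^{(k)}(0)=\frac{1}{k!}\left[\frac{d^{k}}{dr^{k}}\left(\frac{\Gamma(\beta+1)}{(1-r)^{n}\left(\frac{1}{2}+\frac{r}{1-r}+i\xi\right)^{\beta+1}}\right)\right]_{r=0},
\]
which is the assertion, uniformly for $n\in\mathbb{N}$ and $k\in\mathbb{N}\cup\{0\}$.
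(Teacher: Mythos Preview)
Your argument is correct, and it reaches the same conclusion as the paper by a genuinely different and shorter route. Both proofs start from the generating function $\sum_{k}L_{k}^{n-1}(\sigma)r^{k}=(1-r)^{-n}\exp(-\sigma r/(1-r))$, but they diverge at the justification of the $\sum$--$\int$ interchange. The paper uses only the crude bound $|L_{j}^{n-1}(\sigma)e^{-\sigma/2}|\le\binom{j+n-1}{n-1}$, which (as you observe) gives the non-integrable majorant $\sigma^{Re(\beta)}$ on $(0,\infty)$; consequently the authors truncate to $[\epsilon,1/\epsilon]$, deform the contour of $\int_{\epsilon}^{1/\epsilon}\sigma^{\beta}e^{-\sigma u}\,d\sigma$ via Cauchy's theorem, and track the error terms $I_{1}(u,\epsilon)$, $I_{2}(u,\epsilon)$ and all their $u$-derivatives as $\epsilon\to 0$. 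Your approach bypasses this entirely: the Cauchy-coefficient estimate applied to the generating function itself yields $|L_{k}^{n-1}(\sigma)|\le(1-\rho')^{-n}\rho'^{-k}e^{\frac{\rho'}{1+\rho'}\sigma}$ with $\frac{\rho'}{1+\rho'}<\frac{1}{2}$, so a single application of dominated convergence on $(0,\infty)$ suffices, and the formula $\int_{0}^{\infty}\sigma^{\beta}e^{-\sigma z}\,d\sigma=\Gamma(\beta+1)z^{-\beta-1}$ can be invoked directly. Your proof is more economical; the paper's, though longer, has the minor advantage of relying only on the standard uniform bound for $L_{j}^{n-1}(\sigma)e^{-\sigma/2}$ rather than on a growth estimate extracted ad hoc from the generating function.
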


\begin{proof} Let $0<\epsilon <1$ be fixed. From the generating function identity (4.17.3) in \cite{Lebedev} p. 77, we have

\begin{equation}
\sum\limits_{j\geq 0}
\sigma ^{\beta }L_{j}^{n-1}\left( \sigma \right) e^{-\sigma \left( \frac{1}{2}+i\xi \right) } r^{j}=\frac{1}{(1-r)^{n}}
\sigma ^{\beta }e^{-\sigma \left( \frac{1}{2}+\frac{r}{1-r}+i\xi
\right)}, \,\,\,\,\,\,\,\,\,\, |r|<1. \label{ident3}
\end{equation}
Since $\left\vert L_{j}^{n-1}\left( \sigma \right) e^{-\sigma
\frac{1}{2}} \right\vert \leq \frac{(j+n-1)!}{j!(n-1)!}$ for all
$\sigma > 0$ (see proposition 4.2 in \cite{thangavelu}), the series in
(\ref{ident3}) is uniformly convergent on the interval
$\left[\epsilon, \frac{1}{\epsilon} \right]$. Integrating on this interval we obtain
$$
\sum\limits_{j\geq 0}\left( \int\limits_{\epsilon }^{\frac{1}{\epsilon }%
}\sigma ^{\beta }L_{j}^{n-1}\left( \sigma \right) e^{-\sigma \left( \frac{1}{%
2}+i\xi \right) }d\sigma \right) r^{j}=\frac{1}{(1-r)^{n}}%
\int\limits_{\epsilon }^{\frac{1}{\epsilon }}\sigma ^{\beta
}e^{-\sigma \left( \frac{1}{2}+\frac{r}{1-r}+i\xi \right) }d\sigma,
$$
so
\begin{equation}
\int\limits_{\epsilon }^{\frac{1}{\epsilon }}\sigma ^{\beta
}L_{k}^{n-1}\left( \sigma \right) e^{-\sigma \left(
\frac{1}{2}+i\xi \right)
}d\sigma =\frac{1}{k!}\left[ \frac{d^{k}}{dr^{k}}\left( \frac{1}{%
(1-r)^{n}}\int\limits_{\epsilon }^{\frac{1}{\epsilon }}\sigma
^{\beta }e^{-\sigma \left( \frac{1}{2}+\frac{r}{1-r}+i\xi \right)
}d\sigma \right)\right]_{r=0}. \label{ident4}
\end{equation}
Now let us computation $\left[ \frac{d^{k}}{dr^{k}}
\left( \int\limits_{\epsilon }^{\frac{1}{\epsilon
}}\sigma^{\beta} e^{-\sigma \left( \frac{1}{2}+\frac{r}{1-r}+i\xi
\right) }d\sigma \right) \right]_{r=0}$. We start to compute first the derivatives of the function $u\rightarrow \int\limits_{\epsilon
}^{\frac{1}{\epsilon }}\sigma ^{\beta }e^{-\sigma u }d\sigma$, where $Re(u) > 0$. We define
$\alpha_{\epsilon}(\sigma)= \sigma u$, $\sigma \in \left[\epsilon,
\frac{1}{\epsilon} \right]$, so
$$\int\limits_{\epsilon }^{\frac{1}{\epsilon
}}\sigma ^{\beta }e^{-\sigma u }d\sigma = u^{-(\beta+1)}
\int\limits_{\alpha_{\epsilon }} z^{\beta }e^{-z }dz$$
to apply the Cauchy's Theorem we have
\begin{equation}
\int\limits_{\epsilon }^{\frac{1}{\epsilon
}}\sigma ^{\beta }e^{-\sigma u }d\sigma = u^{-(\beta+1)} \left[\int\limits_{\epsilon }^{\frac{1}{\epsilon
}} x^{\beta }e^{-x }dx + I_{1}(u,\epsilon) - I_{2}(u,\epsilon)
\right] \label{ident5}
\end{equation}
where
$$I_{1}(u,\epsilon)=\int\limits_{\left[\frac{1}{\epsilon},\frac{1}{\epsilon}u \right]} z^{\beta }e^{-z }dz$$
and
$$I_{2}(u,\epsilon)=\int\limits_{\left[\epsilon,\epsilon u \right]} z^{\beta }e^{-z}dz$$
are line integrals on $\mathbb{C}$. Now we will prove that for each $u_{0} \in
\mathbb{C}$ with $Re(u_{0})>0$ the following identity holds
\begin{equation}
\lim_{\epsilon \rightarrow 0} \left[ \frac{d^{k}}{du^{k}} I_{j}(u, \epsilon) \right]_{u=u_{0}} = 0
\label{ij}
\end{equation}
for $j=1,2$ and all $k \geq 0$. It is easy to check that
$$I_{2}(u,\epsilon)=\epsilon ^{\beta+1} \int\limits_{\left[1, u \right]} z^{\beta }e^{-\epsilon z}dz.$$
Since $Re(\beta)>-1$ we have that $\lim_{\epsilon
\rightarrow 0} I_{2}(u_{0}, \epsilon) = 0$. From the analyticity of the function $z \rightarrow z^{\beta }e^{-\epsilon z}$ on the region
$\{ z: Re(z)>0 \}$ it follows for $k \geq 1$
$$\left[ \frac{d^{k}}{du^{k}}  I_{2}(u, \epsilon) \right]_{u=u_{0}}=\epsilon
^{\beta+1} \left[ \frac{d^{k-1}}{du^{k-1}} u^{\beta} e^{-\epsilon u} \right]_{u=u_{0}},$$ then $\lim_{\epsilon
\rightarrow 0} \left[ \frac{d^{k}}{du^{k}} I_{2}(u, \epsilon) \right]_{u=u_{0}} = 0$ for all $k\geq 0$. \\
Analogously and taking account the rapid decay of the function $z\rightarrow e^{-z}$ on the region $\{z : Re(z)>0 \}$ we obtain
that $\lim_{\epsilon \rightarrow 0} \left[ \frac{d^{k}}{du^{k}} I_{1}(u, \epsilon) \right]_{u=u_{0}} =0$ for all $k\geq 0$,
so (\ref{ij}) follows. To derive in (\ref{ident5}), from the Leibniz's formula and (\ref{ij}) it follows that
\begin{equation}
\lim_{\epsilon \rightarrow 0} \left[ \frac{d^{k}}{du^{k}} \int\limits_{\epsilon
}^{\frac{1}{\epsilon }}\sigma ^{\beta }e^{-\sigma u }d\sigma
\right]_{u=u_{0}} =\Gamma(\beta+1)\left[ \frac{d^{k}}{du^{k}} u^{-(\beta+1)}\right]_{u=u_{0}}. \label{iu}
\end{equation}
Finally, from (\ref{iu}), to apply the chain rule to the function
$r\rightarrow \int\limits_{\epsilon }^{\frac{1}{\epsilon
}}\sigma ^{\beta }e^{-\sigma u(r) }d\sigma$ where
$u(r)=\frac{1}{2}+\frac{r}{1-r}+i \xi$ and the Leibniz's formula give, to do $\epsilon \rightarrow 0$ in (\ref{ident4}), that
$$\int\limits_{0}^{\infty }\sigma ^{\beta }L_{k}^{n-1}\left(
\sigma \right) e^{-\sigma \left( \frac{1}{2}+i\xi \right)
}d\sigma =\lim_{{\epsilon \rightarrow 0}} \int\limits_{\epsilon }^{
\frac{1}{\epsilon }}\sigma ^{\beta }L_{k}^{n-1}\left( \sigma
\right) e^{-\sigma \left( \frac{1}{2}+i\xi \right) }d\sigma
$$ $$=\frac{1}{k!} \left[\frac{d^{k}}{dr^{k}} \left(
\frac{\Gamma (\beta +1)}{(1-r)^{n} \left(
\frac{1}{2}+\frac{r}{1-r}+i\xi \right)^{\beta +1}}\right) \right]_{r=0}.$$
\end{proof}

\begin{lemma} If $Re(\beta) > -1$ and $w(\xi)= -\frac{\frac{1}{2}-i \xi}{\frac{1}{2}+i \xi}$ $(\xi \in \mathbb{R})$, then
\[
\int\limits_{0}^{\infty }\sigma ^{\beta }L_{k}^{n-1}\left(
\sigma \right) e^{-\sigma \left( \frac{1}{2}+i\xi \right) }d\sigma
=\frac{\Gamma(\beta+1)}{\left(\frac{1}{2}+i\xi \right)^{\beta+1}}
\sum_{j+l=k}
\frac{\Gamma(n-1-\beta+j)\Gamma(\beta+1+l)}{\Gamma(n-1-\beta)\Gamma(\beta+1)}
\frac{w(\xi)^{l}}{j!l!},
\]
for $n \in \mathbb{N}$ and $k \in \mathbb{N} \cup \{ 0 \}$.
\end{lemma}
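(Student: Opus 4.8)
The plan is to derive the identity directly from the closed expression for the integral given in Lemma~6, by reading off the $k$-th Taylor coefficient at $r=0$ of
\[
g(r)=\frac{\Gamma(\beta+1)}{(1-r)^{n}\left(\frac{1}{2}+\frac{r}{1-r}+i\xi\right)^{\beta+1}}
\]
via the binomial series. The algebraic core is the elementary identity
\[
\frac{1}{2}+\frac{r}{1-r}+i\xi=\frac{\left(\frac{1}{2}+i\xi\right)(1-r)+r}{1-r}=\frac{\left(\frac{1}{2}+i\xi\right)\bigl(1-rw(\xi)\bigr)}{1-r},
\]
which follows at once from $\left(\frac{1}{2}+i\xi\right)(1-r)+r=\left(\frac{1}{2}+i\xi\right)+r\left(\frac{1}{2}-i\xi\right)$ together with the definition $w(\xi)=-\frac{\frac{1}{2}-i\xi}{\frac{1}{2}+i\xi}$; note also that $|w(\xi)|=1$ for every $\xi\in\mathbb{R}$.

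Next I would show that, for $r$ in a small neighbourhood of $0$, one has, with principal branches,
\[
g(r)=\frac{\Gamma(\beta+1)}{\left(\frac{1}{2}+i\xi\right)^{\beta+1}}\,(1-r)^{\beta+1-n}\bigl(1-rw(\xi)\bigr)^{-(\beta+1)} .
\]
For this it suffices to split the power $\left(\tfrac12+\tfrac{r}{1-r}+i\xi\right)^{\beta+1}$ using the previous identity; the splitting is legitimate because, for $|r|$ small, each of $\tfrac12+i\xi$, $1-r$, $1-rw(\xi)$ lies in the right half-plane with argument in $\left(-\tfrac{\pi}{2},\tfrac{\pi}{2}\right)$, so the principal logarithms add up and $(z_1z_2z_3)^{\beta+1}=z_1^{\beta+1}z_2^{\beta+1}z_3^{\beta+1}$ with $z_1=\tfrac12+i\xi$, $z_2=1-rw(\xi)$, $z_3=(1-r)^{-1}$.

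Then I would expand both remaining factors by the binomial theorem, valid for $|r|<1$ (hence $|rw(\xi)|<1$):
\[
(1-r)^{\beta+1-n}=\sum_{j\geq0}\frac{\Gamma(n-1-\beta+j)}{\Gamma(n-1-\beta)\,j!}\,r^{j},\qquad
\bigl(1-rw(\xi)\bigr)^{-(\beta+1)}=\sum_{l\geq0}\frac{\Gamma(\beta+1+l)}{\Gamma(\beta+1)\,l!}\,w(\xi)^{l}r^{l},
\]
multiply the two series (a Cauchy product, legitimate inside the common disc of convergence) and collect the coefficient of $r^{k}$, which equals $\sum_{j+l=k}\frac{\Gamma(n-1-\beta+j)\,\Gamma(\beta+1+l)}{\Gamma(n-1-\beta)\,\Gamma(\beta+1)}\frac{w(\xi)^{l}}{j!\,l!}$. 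Since $\frac{1}{k!}\bigl[\frac{d^{k}}{dr^{k}}g(r)\bigr]_{r=0}$ is exactly $\Gamma(\beta+1)\bigl(\tfrac12+i\xi\bigr)^{-(\beta+1)}$ times this coefficient, Lemma~6 yields the claimed formula.

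The only point requiring some care is the branch bookkeeping for the power in the second display; everything else is the routine binomial expansion and the Cauchy product, so I do not anticipate any real obstacle.
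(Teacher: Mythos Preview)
Your argument is correct and follows essentially the same route as the paper: rewrite $\tfrac12+\tfrac{r}{1-r}+i\xi$ as $\tfrac{(\frac12+i\xi)(1-rw)}{1-r}$, factor the function into $(1-r)^{\beta+1-n}(1-rw)^{-(\beta+1)}$ times $(\tfrac12+i\xi)^{-(\beta+1)}$, expand by the binomial series, and read off the $r^{k}$ coefficient to feed into the previous lemma. (Two small remarks: the result you invoke is Lemma~5 in the paper's numbering, not Lemma~6; and your explicit check that the principal branches split correctly is a nice point the paper glosses over.)
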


\begin{proof}
We will start find the power series centered at $r=0$ of the following function
$$Q(r)=\frac{1}{(1-r)^{n} \left( \frac{1}{2}+ \frac{r}{1-r}+i \xi \right)^{\beta+1}}, \,\,\,\,\,\,\,\,\,\, |r| < 1.$$
We observe that
$$Q(r)=\frac{1}{(1-r)^{n-\beta-1}\left(\frac{1}{2}+ i\xi+ r\left( \frac{1}{2}-i \xi \right) \right)^{\beta+1} },$$
doing $w=-\frac{\frac{1}{2}-i \xi}{\frac{1}{2}+i \xi}$, we obtain
$$Q(r)=\frac{1}{ \left(\frac{1}{2}+i \xi \right)^{\beta+1} (1-r)^{n-1-\beta}(1-rw)^{\beta+1}}.$$
A simple computation gives
$$(1-r)^{-n+\beta+1}=1+\sum_{j\geq 1}
(n-1-\beta)(n-1-\beta+1)...(n-1-\beta+j-1) \frac{r^{j}}{j!}$$
\begin{equation}
=\sum_{j\geq 0} \frac{\Gamma(n-1-\beta+j)}{\Gamma(n-1-\beta)}
\frac{r^{j}}{j!}. \label{serie2}
\end{equation}
Analogously we have
\[
(1-rw)^{-\beta-1}=\sum_{j\geq 0}
\frac{\Gamma(\beta+1+j)}{\Gamma(\beta+1)} \frac{(rw)^{j}}{j!}.
\]
Thus
$$Q(r)=\frac{1}{\left(\frac{1}{2}+i\xi \right)^{\beta+1}} \left(
\sum_{j+l \geq 0}
\frac{\Gamma(n-1-\beta+j)\Gamma(\beta+1+l)}{\Gamma(n-1-\beta)\Gamma(\beta+1)}
\frac{r^{j+l}w^{l}}{j!l!} \right).$$
Finally, from Lemma 5 it follows
$$\int\limits_{0}^{\infty }\sigma ^{\beta }L_{k}^{n-1}\left(
\sigma \right) e^{-\sigma \left( \frac{1}{2}+i\xi \right) }d\sigma
=\frac{\Gamma(\beta+1)}{\left(\frac{1}{2}+i\xi \right)^{\beta+1}}
\sum_{j+l=k}
\frac{\Gamma(n-1-\beta+j)\Gamma(\beta+1+l)}{\Gamma(n-1-\beta)\Gamma(\beta+1)}
\frac{w^{l}}{j!l!}.$$
\end{proof}

\begin{lemma} If $Re(\beta) > -1$, then
\[
\sum_{j+l=k} \frac{\Gamma(n-1-Re(\beta)+j)) \Gamma(Re(\beta)+1+l)}{\Gamma(n-1-Re(\beta))\Gamma(Re(\beta)+1)} \frac{1}{j!l!}=\frac{(n+k-1)!}{(n-1)!k!},
\]
for $n \in \mathbb{N}$ and $k \in \mathbb{N} \cup \{ 0 \}$.
\end{lemma}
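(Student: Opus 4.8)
The plan is to recognize this identity as an instance of the Chu--Vandermonde convolution, and to prove it by the same generating-function manipulation already used in the proof of Lemma 7. Put $a=n-1-Re(\beta)$ and $b=Re(\beta)+1$, so that $a+b=n$ and $b>0$. Writing $(x)_{m}=x(x+1)\cdots(x+m-1)=\Gamma(x+m)/\Gamma(x)$ for the Pochhammer symbol (the Gamma quotient being read as this polynomial in $x$ when $x$ is a non-positive integer, so that no convergence issue arises from a pole of $\Gamma$), the left-hand side of the asserted identity is exactly $\sum_{j+l=k}\frac{(a)_{j}}{j!}\,\frac{(b)_{l}}{l!}$.

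First I would record, just as in $(\ref{serie2})$ in the proof of Lemma 7, the two power series
\[
(1-r)^{-a}=\sum_{j\geq 0}\frac{(a)_{j}}{j!}\,r^{j},\qquad (1-r)^{-b}=\sum_{l\geq 0}\frac{(b)_{l}}{l!}\,r^{l},\qquad |r|<1,
\]
valid for arbitrary real (indeed complex) exponents. Both series are absolutely convergent for $|r|<1$, so their product may be evaluated by the Cauchy product, yielding
\[
(1-r)^{-a}(1-r)^{-b}=\sum_{k\geq 0}\Big(\sum_{j+l=k}\frac{(a)_{j}}{j!}\,\frac{(b)_{l}}{l!}\Big)\,r^{k},\qquad |r|<1.
\]
On the other hand $(1-r)^{-a}(1-r)^{-b}=(1-r)^{-(a+b)}=(1-r)^{-n}$, whose Taylor expansion at $r=0$ is $\sum_{k\geq 0}\frac{(n)_{k}}{k!}\,r^{k}=\sum_{k\geq 0}\frac{(n+k-1)!}{(n-1)!\,k!}\,r^{k}$.

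Finally I would equate the coefficients of $r^{k}$ in these two expansions of $(1-r)^{-n}$, obtaining $\sum_{j+l=k}\frac{(a)_{j}}{j!}\,\frac{(b)_{l}}{l!}=\frac{(n+k-1)!}{(n-1)!\,k!}$, which is the claimed identity after substituting back $a=n-1-Re(\beta)$ and $b=Re(\beta)+1$. There is essentially no obstacle here: the computation is elementary, and the only point deserving a word of care is the interpretation of $\Gamma(n-1-Re(\beta)+j)/\Gamma(n-1-Re(\beta))$ when $n-1-Re(\beta)$ is a non-positive integer, handled by reading it as the polynomial $(n-1-Re(\beta))_{j}$ as above; the remaining justification is the standard one for the Cauchy product of two absolutely convergent power series. (An alternative, equally routine, is induction on $k$ using Pascal-type recurrences for the Pochhammer symbols.)
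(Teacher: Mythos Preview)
Your proof is correct and follows essentially the same route as the paper: both arguments multiply the two binomial series $(1-r)^{-(n-1-Re(\beta))}$ and $(1-r)^{-(Re(\beta)+1)}$, collapse the product to $(1-r)^{-n}$, and read off the coefficient of $r^{k}$ (the paper phrases this as taking $\frac{1}{k!}\frac{d^{k}}{dr^{k}}$ at $r=0$, which is the same thing). One small remark: the identity $(\ref{serie2})$ you invoke actually appears in the proof of the preceding lemma, not of the present one; and your explicit handling of the case where $n-1-Re(\beta)$ is a non-positive integer via the Pochhammer symbol is a nice clarification that the paper leaves implicit.
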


\begin{proof} From $(\ref{serie2})$ it obtains
$$\sum_{j+l=k}
\frac{\Gamma(n-1-Re(\beta)+j))
\Gamma(Re(\beta)+1+l)}{\Gamma(n-1-Re(\beta))\Gamma(Re(\beta)+1)}
\frac{1}{j!l!}$$ $$= \frac{1}{k!} \left[ \frac{d^{k}}{dr^{k}}
(1-r)^{-n+Re(\beta)+1}
(1-r)^{-Re(\beta)-1}\right]_{r=0}=\frac{1}{k!} \left[ \frac{d^{k}}{dr^{k}}
(1-r)^{-n} \right]_{r=0}.$$
Since
$$(1-r)^{-n} =\sum_{j\geq 0} \frac{\Gamma(n+j)}{\Gamma(n)}
\frac{r^{j}}{j!}=\sum_{j\geq 0} \frac{(n+j-1)!}{(n-1)!j!} r^{j},$$
we have $$\sum_{j+l=k} \frac{\Gamma(n-1-Re(\beta)+j))
\Gamma(Re(\beta)+1+l)}{\Gamma(n-1-Re(\beta))\Gamma(Re(\beta)+1)}
\frac{1}{j!l!}=\frac{(n+k-1)!}{(n-1)!k!}.$$
\end{proof}

\section{The main results}

To prove Theorem 1 we will decompose the operator $T_{\mu_{\gamma}}$ of the following way: we consider a family $\left\{T_{\mu_{k}} \right\}_{k \in \mathbb{N}}$ of operators such that
$T_{\mu_{\gamma}} = \displaystyle{\sum_{k \in \mathbb{N}}} T_{\mu_{k}}$, $\left\Vert T_{\mu_{k}} \right\Vert_{1,1} \sim 2^{-k(2n-\gamma)}$ and $\left\Vert T_{\mu_{k}} \right\Vert_{p,q} \sim 2^{k\gamma}\left\Vert T_{\mu_{0}} \right\Vert_{p,q}$ where $T_{\mu_{0}}$ is the operator defined by (\ref{tmu}), taking there $\gamma =0$ and $\varphi(w)=\sum_{j=1}^{n} a_j \left\vert w_j\right\vert^{2}$.
Then Theorem 1 will follow from Theorem 1 in \cite{G-R}, the Riesz-Thorin convexity Theorem and Lemma 4.
\\
${}$
\\
\textit{Proof of Theorem 1.} For each $k \in \mathbb{N}$ we define
\[
A_{k}= \left\{ y=(y_1, ..., y_n) \in (\mathbb{R}^{2})^{n} : 2^{-k} < \left\vert y_j \right\vert \leq 2^{-k+1}, j=1, 2,..., n \right\}
\]
Let $\mu_{k}$ be the fracional Borel measure given by
\[
\mu_{k}(E)= \int_{A_{k}} \chi_{E} \left(y, \varphi(y) \right) \prod_{j=1}^{n} \eta_j \left( |y_j|^{2} \right) | y_j |^{-\frac{\gamma}{n}} dy
\]
and let $T_{\mu_{k}}$ be its corresponding convolution operator, i.e: $T_{\mu_{k}}f=f \ast \mu_{k}$.
Now, it is clear that  $\mu_{\gamma}=\sum_{k}\mu_{k}$ and $\left\Vert T_{\mu_{\gamma}} \right\Vert_{p,q} \leq \sum_{k} \left\Vert T_{\mu_{k}} \right\Vert_{p,q}$.
For $f\geq 0$ we have that
$$
\int f(y,s) d\mu_{k}(y,s) \leq 2^{k\gamma}\int_{\mathbb{R}^{2n}} f\left(y, \varphi(y) \right) \prod_{j=1}^{n} \eta_j \left( |y_j|^{2} \right) dy.$$
Thus $\left\Vert T_{\mu_{k}} \right\Vert_{p,q} \leq c 2^{k\gamma}\left\Vert T_{\mu_{0}} \right\Vert_{p,q}$, from Theorem 1 in \cite{G-R} it follows that
\[
\left\Vert T_{\mu_{k}} \right\Vert_{\frac{2n+2}{2n+1},2n+2} \leq c 2^{k\gamma}.
\]
It is easy to check that $\left\Vert T_{\mu_{k}}
\right\Vert_{1,1}\leq \left\vert \mu_{k} (\mathbb{R}^{2n+1})
\right\vert \sim \int_{A_{k}} \left\vert y \right\vert^{-\gamma} dy= c 2^{-k(2n-\gamma)}.$ \\
For $0< \theta <1$, we define
$$\left(\frac{1}{p_{\theta}}, \frac{1}{q_{\theta}} \right) = \left(\frac{2n+1}{2n+2}, \frac{1}{2n+2} \right) (1-\theta) + (1,1)\theta,$$
by the Riesz convexity Theorem we have
$$
\left\Vert T_{\mu_{k}} \right\Vert_{p_{\theta},q_{\theta}} \leq c
2^{k\gamma(1-\theta)-k(2n-\gamma) \theta}$$
choosing $\theta$ such that $k\gamma(1-\theta)-k(2n-\gamma) \theta = 0$ result $\displaystyle{\sup_{k \in \mathbb{N}}} \left\Vert T_{\mu_{k}}
\right\Vert_{p_{\theta},q_{\theta}} \leq c < \infty$. A simple computation gives $\theta=\frac{2n-\gamma}{2n}$, then $\left( \frac{1}{p_{\theta}}, \frac{1}{q_{\theta}} \right) = \left( \frac{1}{p_{D}}, \frac{1}{q_{D}} \right)$, so $\left\Vert T_{\mu_{k}} \right\Vert_{p_{D},q_{D}} \leq c$,
where $c$ no depend on $k$. Interpolating once again, but now between the points $\left(\frac{1}{p_{D}}, \frac{1}{q_{D}} \right)$ and $(1,1)$ we obtain, for each $0< \tau <1$ fixed
$$
\left\Vert T_{\mu_{k}} \right\Vert_{p_{\tau},q_{\tau}} \leq c 2^{-k(2n-\gamma) \tau},$$
since $\left\Vert T_{\mu_{\gamma}} \right\Vert_{p,q} \leq \sum_{k} \left\Vert T_{\mu_{k}} \right\Vert_{p,q}$ and $0< \gamma < 2n$, it follows that
$$
\left\Vert T_{\mu_{\gamma}} \right\Vert_{p_{\tau},q_{\tau}} \leq c\sum_{k \in \mathbb{N}} 2^{-k(2n-\gamma) \tau} <\infty,$$ by duality we also have
$$\left\Vert T_{\mu_{\gamma}} \right\Vert_{\frac{q_{\tau}}{q_{\tau}-1},\frac{p_{\tau}}{p_{\tau}-1}}\leq c_{\tau} <\infty.$$
Finally, the theorem follows from the Riesz convexity Theorem, the restrictions that appear in (\ref{restricciones}) and Lemma 4.
$\square$

\qquad

To prove Theorem 2, we will consider an auxiliary operator $T_N$, with $N \in \mathbb{N}$ fixed, which will be embedded in an analytic family $T_{N,z}$ of operators on the strip $-\frac{2n-\gamma }{2+\gamma }\leq Re(z)\leq 1$ such that
\begin{equation}
\left\{
\begin{array}{c}
\left\Vert T_{N,z}\left( f\right) \right\Vert _{L^{\infty }\left( \mathbb{H}%
^{n}\right) }\leq A_{z}\left\Vert f\right\Vert _{L^{1}\left( \mathbb{H}%
^{n}\right) }\qquad Re(z)=1 \\
\left\Vert T_{N,z}\left( f\right) \right\Vert _{L^{2}\left( \mathbb{H}%
^{n}\right) }\leq A_{z}\left\Vert f\right\Vert _{L^{2}\left( \mathbb{H}%
^{n}\right) }\qquad Re(z)=-\frac{2n-\gamma }{%
2+\gamma } \label{desig2}
\end{array}
\right.
\end{equation}
where $A_{z}$ will depend admissibly on the variable $z$ and it will not depend on $N$. We denote $T_N = T_{N,0}$. By Stein's theorem on complex interpolation, it will follow that the operator $T_{N}$ will be bounded from $L^{p_{\gamma}}(\mathbb{H}^{n})$ into $L^{p'_{\gamma}}(\mathbb{H}^{n})$, where
$\left(\frac{1}{p_{\gamma}}, \frac{1}{p'_{\gamma}} \right)=C_{\gamma}$, uniformly in $N$. If we see that $T_{N}f(x,t) \rightarrow c T_{\mu_{\gamma}}f(x,t)$ a.e.$(x,t)$ as $N\rightarrow \infty$, then Theorem 2 will follow from Fatou's Lemma.

To prove the second inequality in (\ref{desig2}) we will see that such a
family will admit the expression
\[
T_{N,z}(f)(x,t)=\left( f\ast K_{N,z}\right) (x,t),
\]%
where $K_{N,z}\in L^{1}(\mathbb{H}^{n})$, moreover it is a \textit{polyradial}
function (i.e. the values of $K_{N,z}$ depend on $\left\vert
w_{1}\right\vert ,$...$,\left\vert w_{n}\right\vert $ and $t$). Now our
operator $T_{N,z}$ can be realized as a multiplication of operators via the
group Fourier transform, i.e.
\[
\widehat{T_{N,z}(f)}(\lambda )=\widehat{f}(\lambda )\widehat{K_{N,z}}%
(\lambda )
\]%
where, for each $\lambda \neq 0$, $\widehat{K_{N,z}}(\lambda )$ is an
operator on the Hilbert space $L^{2}(\mathbb{R}^{n})$ given by
\[
\widehat{K_{N,z}}(\lambda )g(\xi )=\int\limits_{\mathbb{H}%
^{n}}K_{N,z}(\varsigma ,t)\pi _{\lambda }(\varsigma ,t)g(\xi )d\varsigma dt.
\]%
It then follows from Plancherel's theorem for the group Fourier transform
that
\[
\left\Vert T_{N,z}f\right\Vert _{L^{2}(\mathbb{H}^{n})}\leq A_{z}\left\Vert
f\right\Vert _{L^{2}(\mathbb{H}^{n})}
\]%
if and only if
\begin{equation}
\left\Vert \widehat{K_{N,z}}(\lambda )\right\Vert _{op}\leq A_{z}
\label{L21}
\end{equation}%
uniformly over\textit{\ }$N$\textit{\ }and\textit{\ }$\lambda \neq 0.$ Since
$K_{N,z}$ is a poliradial integrable function, then by a well known result
of Geller (see Lemma 1.3, p. 213 in \cite{geller}), the operators $\widehat{%
K_{N,z}}(\lambda ):L^{2}(\mathbb{H}^{n})\rightarrow L^{2}(\mathbb{H}^{n})$
are, for each $\lambda \neq 0$, diagonal with respect to a Hermite basis for
$L^{2}(\mathbb{R}^{n})$. This is
\[
\widehat{K_{N,z}}(\lambda )=C_{n}\left( \delta _{\gamma ,\alpha }\nu
_{N,z}(\alpha ,\lambda )\right) _{\gamma ,\alpha \in \mathbb{N}_{0}^{n}}
\]%
where $C_{n}=(2\pi )^{n}$, $\alpha =(\alpha _{1},...,\alpha _{n})$, $\delta _{\gamma ,\alpha }=1$ if $\gamma = \alpha$ and $\delta _{\gamma ,\alpha }=0$ if $\gamma \neq \alpha$, and the
diagonal entries $\nu _{N,z}(\alpha _{1},...,\alpha _{n},\lambda )$ can be
expressed explicitly in terms of the Laguerre transform. We have in fact
\[
\nu_{N,z}(\alpha _{1},...,\alpha _{n},\lambda )=\int\limits_{0}^{\infty
}\,...\,\int\limits_{0}^{\infty }\,K_{N,z}^{\lambda
}(r_{1},...,r_{n})\prod_{j=1}^{n}\left( r_{j}L_{\alpha _{j}}^{0}(\frac{1}{2}%
\left\vert \lambda \right\vert r_{j}^{2})e^{-\frac{1}{4}\left\vert \lambda
\right\vert r_{j}^{2}}\right) \,dr_{1}...dr_{n}
\]%
where $L_{k}^{0}(s)$ are the Laguerre polynomials, i.e. $L_{k}^{0}(s)=%
\sum_{i=0}^{k}\left( \frac{k!}{(k-i)!i!}\right) \frac{(-s)^{i}}{i!}$ and $%
K_{N,z}^{\lambda }(\varsigma )=\int\limits_{\mathbb{R}}K_{N,z}(\varsigma
,t)e^{i\lambda t}dt.$ Now (\ref{L21}) is equivalent to
\[
\left\Vert T_{N,z}f\right\Vert _{L^{2}(\mathbb{H}^{n})}\leq A_{z}\left\Vert
f\right\Vert _{L^{2}(\mathbb{H}^{n})}
\]%
if and only if
\begin{equation}
\left\vert \nu_{N,z}(\alpha _{1},...,\alpha _{n},\lambda )\right\vert \leq
A_{z}  \label{L22}
\end{equation}%
uniformly over\textit{\ }$N$, $\alpha _{j}$\textit{\ }and\textit{\ }$\lambda
\neq 0.$ If $Re(z)=-\frac{2n-\gamma }{2+\gamma }$, in the proof of Theorem 2 we find that (\ref{L22}) holds with $A_{z}$
independent of $N$, $\lambda \neq 0$ and $\alpha _{j}$, and then we obtain
the boundedness on $L^{2}(\mathbb{H}^{n})$ that is stated in (\ref{desig2}).

We consider the family $\{ I_{z} \}_{z \in \mathbb{C}}$ of distributions on $\mathbb{R}$ that arises by analytic continuation of the family
$\{ I_{z} \}$ of functions, initially given when $Re(z)>0$ and $s\in \mathbb{R} \setminus\{ 0 \}$ by
\begin{equation}
I_{z}(s)=\frac{2^{-\frac{z}{2}}}{\Gamma \left( \frac{z}{2}\right) }%
\left\vert s\right\vert ^{z-1}.  \label{iz}
\end{equation}%
In particular, we have $%
\widehat{I_{z}}=I_{1-z}$, also $I_{0}=c\delta $ where $\widehat{\cdot }$
denotes the Fourier transform on $\mathbb{R}$ and $\delta $ is the Dirac
distribution at the origin on $\mathbb{R}$.

Let $H\in S(\mathbb{R)}$ such that $supp(\widehat{H})\subseteq
\left( -1,1\right) $ and $\int \widehat{H}(t)dt=1$. Now we put $\phi
_{N}(t)=H(\frac{t}{N})$ thus $\widehat{\phi _{N}}(\xi )=N\widehat{H}(N\xi )$
and $\widehat{\phi _{N}}\rightarrow \delta $ in the sense of the
distribution, as $N\rightarrow \infty $.

For $z\mathbb{\in C}$ and $N\in \mathbb{N}$, we also define $J_{N,z}$ as the
distribution on $\mathbb{H}^{n}$ given by the tensor products
\begin{equation}
J_{N,z}=\delta \otimes ...\otimes \delta \otimes \left( I_{z}\ast _{\mathbb{R%
}}\widehat{\phi _{N}}\right)  \label{jz}
\end{equation}%
where $\ast _{\mathbb{R}}$ denotes the usual convolution on $\mathbb{R}$ and
$I_{z}$ is the fractional integration kernel given by (\ref{iz}). We observe that
\begin{equation}
J_{N,0}=\delta \otimes
...\otimes \delta \otimes c\widehat{\phi _{N}}\rightarrow \delta \otimes ...\otimes \delta \otimes c\delta   \label{jz2}
\end{equation}
in the sense of the distribution as $N \rightarrow \infty $.

\qquad

\textit{Proof of Theorem 2.} Let $\left\{ T_{N,z}\right\} $ be the family operators on the strip $-\frac{2n-\gamma }{
2+\gamma }\leq Re(z)\leq 1$, given by
\[
T_{N,z}f=f\ast \mu_{\gamma, z}\ast J_{N,z},
\]
where $J_{N,z}$ is given by (\ref{jz}) and $\mu _{\gamma, z}$ by
\begin{equation}
\mu_{\gamma, z}(E)=\int\limits_{\mathbb{R}^{2n}}\chi _{E}\left(
w,\varphi(w)\right) \prod_{j=1}^{n} \eta_j \left( |w_j|^{2} \right)
\left\vert w_j \right\vert^{(z-1) \frac{\gamma}{n}}dw. \label{muz}
\end{equation}
Now (\ref{jz2}) implies that $T_{N, 0}f(x,t)\rightarrow c T_{\mu_{\gamma}}f(x,t)$ a.e.$(x,t)$ as $N\rightarrow \infty $.

\qquad

For $Re(z)=1$ we have
$$\mu _{\gamma, z}\ast J_{N,z}(x,t)=  \left( I_{z}\ast _{\mathbb{R}}\widehat{\phi
_{N}}\right) \left( t-\varphi(x)\right) \prod_{j=1}^{n}\eta_{j} \left( |x_j|^{2} \right) |x_j|^{iIm(z)\frac{\gamma}{n} },$$
so $\left\Vert \mu_{\gamma, z}\ast J_{N,z}\right\Vert _{\infty }\leq c\left\vert \Gamma \left( \frac{z}{2}\right) \right\vert ^{-1}$.
Then, for $Re(z)=1$, we obtain
$$
\left\Vert T_{N,z}f\right\Vert _{\infty }\leq \left\Vert f\ast \mu_{\gamma, z}\ast J_{N,z}\right\Vert _{\infty }\leq \left\Vert f\right\Vert
_{1}\left\Vert \mu_{\gamma, z}\ast J_{N,z}\right\Vert _{\infty }\leq c\left\vert \Gamma \left( \frac{z}{2}\right) \right\vert ^{-1}\left\Vert f\right\Vert _{1}
$$
where $c$ is a positive constant independent of $N$ and $z$.

\qquad

We put $K_{N, z} = \mu _{\gamma, z}\ast J_{N,z}$, for $Re(z)=-\frac{2n-\gamma }{2+\gamma }$ we have that $K_{N,z} \in L^{1}(\mathbb{H}^{n})$. Indeed
\[
K_{N,z}(x,t)=\left( I_{z}\ast_{\mathbb{R}}\widehat{\phi _{N}}\right) \left( t-\varphi( x) \right) \prod_{j=1}^{n}\eta_{j} \left( |x_j|^{2} \right) |x_j|^{(z-1) \frac{\gamma}{n}},
\]
since $0<\gamma <2n$ it follows that $2+Re((z-1)\frac{\gamma}{n})=2-\frac{2n+2}{2+\gamma} \frac{\gamma}{n} > 0$ and so $\prod_{j=1}^{n}\eta_{j} \left( |x_j|^{2} \right) |x_j|^{(z-1) \frac{\gamma}{n}} \in L^{1}(\mathbb{R}^{2n})$, in the proof of Lemma 5 in \cite{G-R} it shows that $\left( I_{z}\ast_{\mathbb{R}}\widehat{\phi _{N}}\right) \in L^{1}(\mathbb{R})$. These two facts imply that $K_{N,z} \in L^{1}(\mathbb{H}^{n})$.
In addition $K_{z,N}$ is a polyradial function. Thus the operator $\widehat{K_{z,N}}(\lambda )$ is diagonal with respect to a Hermite base for $L^{2}(\mathbb{R}^{n})$, and its diagonal entries $\nu_{z,N}(\alpha,\lambda )$, with $\alpha =(\alpha_1, ..., \alpha_n) \in \mathbb{N}_{0}^{n}$, are given by
\[
\nu_{N, z}(\alpha ,\lambda )= \int\limits_{0}^{\infty
}\,...\,\int\limits_{0}^{\infty }\,K_{N,z}^{\lambda
}(r_{1},...,r_{n})\prod_{j=1}^{n}\left( r_{j}L_{\alpha _{j}}^{0}(| \lambda| r_{j}^{2}/2)e^{-\frac{1}{4}\left\vert \lambda
\right\vert r_{j}^{2}}\right) \,dr_{1}...dr_{n}
\]
\begin{equation}
= I_{1-z}(-\lambda
)\phi _{N}(\lambda ) \prod_{j=1}^{n} \int\limits_{0}^{\infty }\eta_{j}(r_j^{2})L_{\alpha_j}^{0}\left(
| \lambda |r_{j}^{2}/2\right)
e^{-\frac{1}{4}| \lambda | r_{j}^{2}}e^{i\lambda a_j
r_{j}^{2}}r_j^{1+(z-1)\frac{\gamma}{n}} dr_j. \label{diag2}
\end{equation}
Thus, it is enough to study the integral
\[
\int\limits_{0}^{\infty }\eta_{1}(r^{2})L_{\alpha_1}^{0}\left(|\lambda|r^{2}/2\right) e^{-\frac{1}{4} | \lambda|r^{2}}
e^{i\lambda a_1 r^{2}} r^{1+(z-1)\frac{\gamma}{n}} dr,
\]
where $a_1 \in \mathbb{R}$ and $\eta_1 \in C_{c}^{\infty}(\mathbb{R})$. We make the change of variable $\sigma = |\lambda|r^{2}/2$ in such an integral to obtain
\[
\int\limits_{0}^{\infty }\eta_{1}(r^{2})L_{\alpha_1}^{0}\left(|\lambda|r^{2}/2\right) e^{-\frac{1}{4} | \lambda|r^{2}}
e^{i\lambda a_1 r^{2}} r^{1+(z-1)\frac{\gamma}{n}} dr
\]
\[
= 2^{-\frac{(n+1) \gamma}{(2+\gamma)n}}| \lambda |^{-\left( 1+\frac{(z-1)\gamma}{2n}\right)} \int\limits_{0}^{\infty }\eta_{1}\left( \frac{2\sigma }{
| \lambda |}\right) L_{\alpha_1}^{0}\left( \sigma \right) e^{-
\frac{\sigma }{2}}e^{i2sgn(\lambda ) a_1 \sigma }\sigma
^{\frac{(z-1)\gamma}{2n}}d\sigma
\]
\[
= 2^{-\frac{(n+1) \gamma}{(2+\gamma)n}} | \lambda |^{-\left( 1+\frac{(z-1)\gamma}{2n} \right) }\left( F_{\alpha_1 ,\beta}G_{\lambda }\right) \widehat{\left. {}\right. }
(-2sgn(\lambda ) a_1)
\]
\[
= 2^{-\frac{(n+1) \gamma}{(2+\gamma)n}} \left\vert \lambda \right\vert ^{-\left( 1+\frac{(z-1)\gamma}{2n}\right) }(\widehat{F_{\alpha_1 ,\beta}}\ast \widehat{G_{\lambda }}
)(-2sgn(\lambda ) a_1)
\]
where $$F_{\alpha_1, \beta}(\sigma ):=\chi _{(0,\infty )}(\sigma
)L_{\alpha_1}^{0}\left( \sigma \right) e^{-\frac{\sigma }{2}}\sigma
^{\beta },$$ with $\beta =\frac{ (z-1) \gamma }{2n}$, and
$$G_{\lambda }(\sigma ):=\eta _{1}\left( \frac{
2\sigma }{\left\vert \lambda \right\vert }\right).$$
Now
\begin{equation}
\left\vert ( \widehat{F_{\alpha_1, \beta}}\ast \widehat{G_{\lambda
}})(-2sgn(\lambda ) a_1)\right\vert \leq \left\Vert
\widehat{F_{\alpha_1 ,\beta}}\ast \widehat{G_{\lambda }}\right\Vert
_{\infty }\leq \left\Vert \widehat{F_{\alpha_1 ,\beta}}\right\Vert
_{\infty }\left\Vert \widehat{ G_{\lambda }}\right\Vert
_{1}=\left\Vert \widehat{F_{\alpha_1 ,\beta}}\right\Vert _{\infty
}\left\Vert \widehat{\eta _{1}}\right\Vert _{1}. \label{fb}
\end{equation}
So it is enough to estimate $\left\Vert
\widehat{F_{\alpha_1 ,\beta}}\right\Vert _{\infty }$. Since
$$
\widehat{F_{\alpha_1 ,\beta}}(\xi )=\int\limits_{0}^{\infty }\sigma
^{\beta }L_{\alpha_1}^{0}\left( \sigma \right) e^{-\sigma \left(
\frac{1}{2}+i\xi \right) }d\sigma,
$$
from Lemma 6, with $n=1$, $k= \alpha_1$ and $\beta =\frac{ (z-1) \gamma }{2n}$ we obtain
$$\widehat{F_{\alpha_1 ,\beta}}(\xi ) = \frac{\Gamma(\beta+1)}{\left(\frac{1}{2}+i\xi \right)^{\beta+1}}
\sum_{j+l= \alpha_1} \frac{\Gamma(-\beta+j)\Gamma(\beta+1+l)}{\Gamma(-\beta)\Gamma(\beta+1)} \frac{w^{l}}{j!l!},$$
to take modulo in this expression and since $\left\vert w \right\vert = 1$ it follows that
$$\left\vert \widehat{F_{\alpha_1 ,\beta}}(\xi ) \right\vert \leq
\frac{\Gamma(-Re(\beta)) \Gamma(Re(\beta)+1)}{\left\vert
\left(\frac{1}{2}+i\xi \right)^{\beta+1}\right\vert \left\vert
\Gamma(-\beta) \right\vert} \sum_{j+l=\alpha_1}
\frac{\Gamma(-Re(\beta)+j))
\Gamma(Re(\beta)+1+l)}{\Gamma(-Re(\beta))\Gamma(Re(\beta)+1)}
\frac{1}{j!l!}.$$
From Lemma 7, with $n=1$ and $k = \alpha_1$ we have
$$\sum_{j+l=\alpha_1}
\frac{\Gamma(-Re(\beta)+j))
\Gamma(Re(\beta)+1+l)}{\Gamma(-Re(\beta))\Gamma(Re(\beta)+1)}
\frac{1}{j!l!}=1.$$
So
$$\left\vert \widehat{F_{\alpha_1 ,\beta}}(\xi ) \right\vert \leq \frac{\Gamma(-Re(\beta))
\Gamma(Re(\beta)+1)}{\left\vert \left(\frac{1}{2}+i\xi
\right)^{\beta+1}\right\vert \left\vert \Gamma(-\beta)
\right\vert} \leq \frac{\Gamma
\left(\frac{(n+1) \gamma}{(2+\gamma)n} \right) \Gamma
\left(\frac{2n-\gamma}{(2+\gamma)n}\right)}{(1/2)^{\frac{2n-\gamma}{(2+ \gamma)n}} e^{-\frac{Im(z) \pi \gamma}{4n}} \left\vert
\Gamma\left(\frac{(1-z)\gamma}{2n} \right) \right\vert}.$$
Finally, for $Re(z)=-\frac{2n-\gamma}{2+\gamma}$, we obtain
$$| \nu_{z,N}(k,\lambda) |\leq c_{n, \gamma} |I_{1-z}(-\lambda) \phi_{N}(\lambda)| | \lambda |^{-\left( 1+\frac{(z-1)\gamma}{2n}\right)n} \prod_{j=1}^{n} \| \widehat{F_{\alpha_j, \beta}} \|_{\infty} \| \widehat{\eta_{j}} \|_{1}$$
$$
\leq \frac{c_{n, \gamma} \,\, e^{\frac {Im(z) \pi
\gamma}{4}} \left\Vert H \right\Vert_{\infty} \left[ \Gamma
\left(\frac{(n+1) \gamma}{(2+\gamma)n} \right) \right]^{n} \left[\Gamma
\left(\frac{2n-\gamma}{(2+\gamma)n}\right) \right]^{n} \prod_{j=1}^{n} \left\Vert
\widehat{\eta_{j}} \right\Vert_{1}} {\left\vert \Gamma
\left(\frac{1-z}{2}\right) \right\vert \left\vert
\Gamma\left(\frac{(1-z)\gamma}{2n} \right) \right\vert^{n}}.$$
By (\ref{L22}) it follows, for $Re(z)=-\frac{2n-\gamma}{2+\gamma}$, that
\[
\left\Vert T_{N,z}f\right\Vert _{L^{2}(\mathbb{H}^{n})}\leq c_{n, \gamma} \,\, \frac{e^{\frac {Im(z) \pi
\gamma}{4}}}{\left\vert \Gamma \left( \frac{1-z}{2}\right) \right\vert \left\vert
\Gamma\left(\frac{(1-z)\gamma}{2n} \right) \right\vert^{n}}
\left\Vert f\right\Vert _{L^{2}(\mathbb{H}^{n})}
\]%
It is easy to see, with the aid of the Stirling formula (see \cite{stein4},
p. 326), that the family $\left\{ T_{N,z}\right\} $ satisfies, on the strip $%
-\frac{2n-\gamma}{2+\gamma} \leq Re(z) \leq 1$, the hypothesis of the complex interpolation theorem
(see \cite{stein2} p. 205) and so $T_{N,0}$ is bounded from $L^{p_{\gamma}}(\mathbb{H}^{n})$ into $L^{p_{\gamma}'}(\mathbb{H}^{n})$ uniformly on $N$, where
$\left( \frac{1}{p_{\gamma}}, \frac{1}{p_{\gamma}'} \right) = C_{\gamma}$,
then letting $N$ tend to infinity, we obtain that the operator $T_{\mu_{\gamma}}$ is
bounded from $L^{p_{\gamma}}(\mathbb{H}^{n})$ into $L^{p_{\gamma}'}(\mathbb{H}^{n})$.
$\square$

\qquad

We re-establish Theorem 1 and Theorem 2 in the following

\begin{theorem} Let $\mu_{\gamma}$ be a fractional Borel measure as in Theorem 1. Then the interior of $E_{\mu_{\gamma}}$ is the open trapezoidal region with vertices $(0,0)$, $(1,1)$, $D$ and $D'$. Moreover $C_{\gamma}$ and the closed  segments joining $D'$ with $(0,0)$ and $D$ with $(1,1)$ except maybe $D$ and $D'$ are contained in $E_{\mu_{\gamma}}$.
\end{theorem}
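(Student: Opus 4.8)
The plan is to observe that this statement is a synthesis of Theorem~1, Theorem~2, the necessary conditions (\ref{restricciones}) and Lemma~4, glued together by the convexity of the type set, so the proof is essentially bookkeeping. First I would record two general facts. By the Riesz--Thorin convexity theorem, if $(1/p_{0},1/q_{0})$ and $(1/p_{1},1/q_{1})$ lie in $E_{\mu_{\gamma}}$ then so does every point of the segment joining them; hence $E_{\mu_{\gamma}}$ is a convex subset of $[0,1]\times[0,1]$, and the convex hull of finitely many of its points is again contained in it. Second, as noted after (\ref{restricciones}), $(1/p,1/p)\in E_{\mu_{\gamma}}$ for every $1\le p\le\infty$, so the whole principal diagonal, including $(0,0)$ and $(1,1)$, belongs to $E_{\mu_{\gamma}}$.

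Next I would establish the outer inclusion. The three inequalities in (\ref{restricciones}) together with Lemma~4 state that any $(1/p,1/q)\in E_{\mu_{\gamma}}$ satisfies the four affine constraints $1/q\le 1/p$, $1/q\ge (2n+1)/p-2n$, $1/q\ge 1/((2n+1)p)$ and $1/q\ge 1/p-(2n-\gamma)/(2n+2)$. An elementary computation in the $(1/p,1/q)$-plane shows that the intersection of these four half-planes with $[0,1]^{2}$ is exactly the closed trapezoid $T$ with vertices $(0,0)$, $(1,1)$, $D$ and $D'$: the two parallel sides are the diagonal segment from $(0,0)$ to $(1,1)$ and the segment $DD'$ lying on the line $1/q=1/p-(2n-\gamma)/(2n+2)$, while $D$ is the intersection of $1/q=(2n+1)/p-2n$ with $1/q=1/p-(2n-\gamma)/(2n+2)$ and $D'$ its reflection across the anti-diagonal, just as in the text. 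Consequently $E_{\mu_{\gamma}}\subseteq T$, and in particular $\operatorname{int}E_{\mu_{\gamma}}\subseteq\operatorname{int}T$.

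Finally I would invoke the two main theorems. Theorem~1 gives the reverse inclusion $\operatorname{int}T\subseteq E_{\mu_{\gamma}}$, so together with the previous paragraph $\operatorname{int}E_{\mu_{\gamma}}=\operatorname{int}T$, which is the open trapezoidal region in the statement; Theorem~1 also gives the two semi-open segments $[(1,1);D)$ and $[(0,0);D')$, and these are precisely ``the closed segments joining $D$ with $(1,1)$ and $D'$ with $(0,0)$, except possibly the endpoints $D,D'$.'' Theorem~2 gives $C_{\gamma}\in E_{\mu_{\gamma}}$, and since $C_{\gamma}=\bigl(\tfrac{4n+2-\gamma}{2(2n+2)},\tfrac{2+\gamma}{2(2n+2)}\bigr)$ lies on the line $1/q=1/p-(2n-\gamma)/(2n+2)$ and, for $0<\gamma<2n$, strictly between $D$ and $D'$, this is a genuinely new point on the lower edge of $T$. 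Collecting these three facts yields the theorem. (If desired, convexity lets one also re-derive the interior inclusion by taking convex combinations of $(0,0)$, $(1,1)$, $C_{\gamma}$ and points of the two segments arbitrarily close to $D$ and $D'$.)

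There is no real obstacle at this stage: the substantive work is entirely contained in Theorems~1 and~2 (and, behind them, in Theorem~1 of \cite{G-R}, the two interpolation arguments, and Geller's diagonalization lemma). The only thing requiring a modicum of care is the plane geometry identifying the intersection of the four half-planes with the trapezoid $T$ and verifying that $C_{\gamma}$ lies on the open edge $(D,D')$; neither presents any difficulty.
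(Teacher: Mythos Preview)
Your proposal is correct and matches the paper's approach exactly: the paper presents this theorem with the words ``We re-establish Theorem 1 and Theorem 2 in the following'' and gives no separate proof, so the content is precisely the synthesis you describe---Theorem~1 gives the interior characterization and the two half-open segments, Theorem~2 supplies the point $C_{\gamma}$, and the outer bound is already established by (\ref{restricciones}) and Lemma~4. Your additional remarks on convexity and on the location of $C_{\gamma}$ on the open edge $(D,D')$ are accurate and simply make explicit what the paper leaves implicit.
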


\qquad

\textit{Proof of Theorem 3.} We consider, for each $N \in \mathbb{N}$ fixed, the analytic family $\{ U_{N, z} \}$ of operators on the strip $-n \leq Re(z) \leq 1$, defined by $U_{N, z}f = f \ast \widetilde{\mu}_{(1-z)\gamma} \ast J_{N,z}$ where $\widetilde{\mu}_{(1-z)\gamma}$ i given by (\ref{mu3}), $J_{N, z}$ by (\ref{jz}) and $U_{N, 0}f(x,t) \rightarrow U_{\widetilde{\mu}_{\gamma}}f(x,t) := (f \ast \widetilde{\mu}_{\gamma})(x,t)$ a.e.$(x,t)$ as $N \rightarrow \infty$. Proceeding as in the proof of Theorem 2 it follows, for $Re(z) = 1$, that $\| U_{N, z} \|_{1, \infty} \leq c \left| \Gamma (z/2) \right|^{-1}$. Also it is clear that, for $Re(z) = -n$, the kernel
$\widetilde{\mu}_{(1-z)\gamma} \ast J_{N,z} \in L^{1}(\mathbb{H}^{n})$ and it is a radial function. Now, our operator $\widehat{(\widetilde{\mu}_{(1-z)\gamma} \ast J_{N,z})}(\lambda)$ is diagonal, with diagonal entries $\nu_{N,z}(k, \lambda)$ given by
\[
\nu_{z,N}(k,\lambda )=\frac{k!}{(k+n-1)!} \int\limits_{0}^{\infty} (\widetilde{\mu}_{(1-z)\gamma} \ast J_{N,z})(s, \widehat{-\lambda}) L_{k}^{n-1}(|\lambda|s^{2}/2) e^{-|\lambda|s^{2}/4} s^{2n-1} ds
\]
\[
=\frac{k!}{(k+n-1)!}I_{1-z}(-\lambda
)\phi _{N}(\lambda )\int\limits_{0}^{\infty }\eta(s^{2}) L_{k}^{n-1}(|\lambda|s^{2}/2) e^{-|\lambda|s^{2}/4}  e^{i\lambda
s^{2m}}s^{2n-1+(1-z)\gamma }ds.
\]
Now we study this integral, the change of variable $\sigma = |\lambda| s^{2}/2$ gives
\[
\int\limits_{0}^{\infty }\eta(s^{2}) L_{k}^{n-1}(|\lambda|s^{2}/2) e^{-|\lambda|s^{2}/4}  e^{i\lambda
s^{2m}}s^{2n-1+(1-z)\gamma }ds
\]
\[
=2^{n-1+ \frac{(1-z)\gamma}{2}} |\lambda|^{-\left(n + \frac{(1-z)\gamma}{2}\right)} \int\limits_{0}^{\infty }\eta\left( \frac{2\sigma}{ |\lambda|} \right) L^{n-1}_{k}(\sigma) e^{-\frac{\sigma}{2}} e^{i (2\sigma)^{m} |\lambda|^{1-m} sgn(\lambda)}
\sigma^{n-1+ \frac{(1-z)\gamma}{2}} d\sigma.
\]
\[
=2^{n-1+ \frac{(1-z)\gamma}{2}} |\lambda|^{-\left(n + \frac{(1-z)\gamma}{2}\right)}\left(
\widehat{F_{k,\beta}}\ast \left( \widehat{G_{\lambda }}\ast
\widehat{R_{\lambda }}\right) \right) (0),
\]
where
$$F_{k,\beta}(\sigma ):=\chi _{(0,\infty )}(\sigma
)L_{k}^{n-1}\left( \sigma \right) e^{-\frac{\sigma }{2}}\sigma
^{\beta },$$ with $\beta =n-1+\frac{ (1-z)\gamma}{2}$,
$$G_{\lambda }(\sigma ):=\eta\left( 2\sigma / |\lambda|\right)$$ and
$$R_{\lambda }(\sigma )=\chi
_{(0,\left\vert \lambda \right\vert )}(\sigma
)e^{i2^{m}sgn(\lambda )\left\vert \lambda \right\vert ^{1-m}\sigma
^{m}}.$$
Now
\begin{equation}
\left\Vert \widehat{F_{k,\beta}}\ast \left( \widehat{G_{\lambda }}\ast \widehat{
R_{\lambda }}\right) \right\Vert_{\infty } \leq \left\Vert \widehat{F_{k,\beta}}
\right\Vert _{1}\left\Vert \widehat{G_{\lambda }}\right\Vert
_{1}\left\Vert \widehat{R_{\lambda }}\right\Vert _{\infty
}\label{fgr}
\end{equation}
So it is enough to estimate the right side of this inequality. From Lemma 6 and Lemma 7 we obtain
$$\left\vert \widehat{F_{k,\beta}}(\xi ) \right\vert  \leq \frac{\Gamma(n-1-Re(\beta))
\Gamma(Re(\beta)+1)}{\left\vert \left(\frac{1}{2}+i\xi
\right)^{\beta+1}\right\vert \left\vert \Gamma(n-1-\beta)
\right\vert} \frac{(n+k-1)!}{(n-1)!k!}.$$
Since $Re(z)=-n$,
$\gamma = \frac{2(m-1)}{(n+1)m}$ and $\beta =n-1+\frac{ (1-z)\gamma}{2}$ we have $Re(\beta) = n- \frac{1}{m},$ thus it follows that
\begin{equation}
\left\Vert \widehat{F_{k,\beta_{z}}} \right\Vert_{1} \leq \frac{c \,
e^{\frac{\vert Im(z) \vert \gamma \pi}{4}}} {\left\vert \Gamma
\left(\frac{z-1}{2}\gamma \right) \right\vert} \,
\frac{(n+k-1)!}{(n-1)!k!} \, \int\limits_{0}^{\infty}
\frac{1}{ \left(\frac{1}{4}+\xi^{2} \right)^{\frac{1}{2}
\left(n+1-\frac{1}{m} \right)}} d\xi, \label{estif}
\end{equation}
the last integral is finite for all $n \geq 1$ and all $m \geq 2$.
It is clear that $\left\Vert \widehat{G_{\lambda }}\right\Vert_{1}=\left\Vert
\widehat{\eta} \right\Vert_{1}$. Now, we estimate $\left\Vert \widehat{R_{\lambda }}\right\Vert _{\infty
}$. Taking account of Proposition 2, p. 332, in \cite{stein3} we note that
\begin{equation}
\left\vert \widehat{R_{\lambda }}(\xi )\right\vert =\left\vert
\int\limits_{0}^{\left\vert \lambda \right\vert }e^{i(2^{m}sgn(\lambda
)\left\vert \lambda \right\vert ^{1-m}\sigma ^{m}-\xi \sigma )}d\sigma
\right\vert \leq
\frac{C_{m}}{\left\vert \lambda \right\vert ^{\frac{1-m}{m}}}
\label{estir}
\end{equation}
where the constant $C_m$ does not depend on $\lambda$. Then, for $Re(z)=-n$,
from (\ref{fgr}), (\ref{estif}) and (\ref{estir}) we obtain
$$|\nu_{N,z}(k, \lambda)| \leq c \, \frac{k!}{(k+n-1)!} \, |I_{1-z}(-\lambda) \phi_{N}(\lambda)| |\lambda|^{-\left(n + \frac{(1+n)\gamma}{2} \right)}\left\Vert \widehat{F_{k,\beta}}\ast \left( \widehat{G_{\lambda }}\ast \widehat{
R_{\lambda }}\right) \right\Vert_{\infty }$$
$$\leq c_{n,m} \Vert H
\Vert_{\infty} \left\Vert \widehat{\eta} \right\Vert_{1}
\frac{ e^{\frac{\vert Im(z) \vert \gamma \pi}{4}}} {\left\vert \Gamma
\left(\frac{1-z}{2} \right) \right\vert \left\vert \Gamma
\left(\frac{z-1}{2}\gamma \right) \right\vert} \int\limits_{0}^{\infty}
\frac{1}{ \left(\frac{1}{4}+\xi^{2} \right)^{\frac{1}{2}
\left(n+1-\frac{1}{m} \right)}} d\xi.
$$
By (\ref{L22}) it follows, for $Re(z)=-n$, that
\[
\| U_{N,z} f \|_{L^{2}(\mathbb{H})^{n}} \leq c_{n,m} \, \frac{ e^{\frac{\vert Im(z) \vert \gamma \pi}{4}}} {\left\vert \Gamma
\left(\frac{1-z}{2} \right) \right\vert \left\vert \Gamma
\left(\frac{z-1}{2}\gamma \right) \right\vert} \|f\|_{L^{2}(\mathbb{H})^{n}}.
\]
It is clear that the family $\{ U_{N,z} \}$ satisfies, on the strip $-n \leq Re(z) \leq 1$, the hypothesis of the complex interpolation theorem. Thus $U_{N,0}$ is bounded from $L^{\frac{2n+2}{2n+1}}(\mathbb{H}^{n})$ into $L^{2n+2}(\mathbb{H}^{n})$ uniformly in $N$, and letting $N$ tend to infinity we conclude that the operator $U_{\widetilde{\mu}_{\gamma}}$ is bounded from $L^{\frac{2n+2}{2n+1}}(\mathbb{H}^{n})$ into $L^{2n+2}(\mathbb{H}^{n})$ for $n\in \mathbb{N}$. Finally, the theorem follows from the restrictions that appear in (\ref{restricciones}).
$\square$

\end{document}